\def\bga{\boldsymbol{\alpha}}
\def\bgb{\boldsymbol{\beta}}
\def\bl{\boldsymbol{\ell}}
\def\bmu{\boldsymbol{\mu}}
\def\bgl{\boldsymbol{\lambda}}
\def\bi{\boldsymbol{i}}
\def\bk{\boldsymbol{k}}
\def\bx{\boldsymbol{x}}
\def\by{\boldsymbol{y}}
\def\ci{\mathcal{I}}
\def\ga{\alpha}
\def\gb{\beta}
\def\gl{\lambda}
\newdimen\hoogte    \hoogte=14pt    
\newdimen\breedte   \breedte=14pt   
\newdimen\dikte     \dikte=0.5pt    
\numberwithin{equation}{section}
\newtheorem{theorem}[equation]{Theorem}
\newtheorem{lemma}[equation]{Lemma}
\newtheorem{corollary}[equation]{Corollary}
\theoremstyle{definition}
\newtheorem{example}[equation]{Example}
\theoremstyle{remark}
\newtheorem{remark}[equation]{Remark}
\begin{document}
\begin{CJK*}{GBK}{song}
\setlength{\itemsep}{1\jot}
\fontsize{13}{\baselineskip}\selectfont
\setlength{\parskip}{0.3\baselineskip}
\vspace*{0mm}
\title[Characters of Ariki--Koike algebras]{\fontsize{9}{\baselineskip}
\selectfont Characters of Ariki--Koike algebras}
\author{Deke Zhao}
\date{}
\address{\bigskip\hfil\begin{tabular}{l@{}}
          Department of Mathematics\\
            Beijing Normal University at Zhuhai, Zhuhai, 519087\\
             China\\
             E-mail: \it deke@amss.ac.cn \hfill
          \end{tabular}}
\thanks{Supported by Guangdong Basic and Applied Basic Research Foundation
 (Grant No. 2023A1515010251), the National Natural Science Foundation of China (Grant No. 11871107) and the Open Project Program of Key Laboratory of Mathematics and Complex System Beijing Normal University (Grant No. K202402).}
 \dedicatory{Dedicated to Professor Yang Han on the occasion of his 55th birthday}
\subjclass[2020]{Primary 20C99, 16G99; Secondary 05A99, 20C15}
\keywords{Complex reflection group; Ariki--Koike algebra; Character; hook partition; Schur--Weyl reciprocity}
\vspace*{-3mm}
\begin{abstract}In this paper, we prove the Regev formula for the characters of the Ariki--Koike algebras by applying the Schur--Sergeev reciprocity between quantum superalgebras and Ariki--Koike algebras. As a corollary, we derive the Regev formula for the characters of the complex reflection group of type $G(m,1,n)$, generalizing the Regev  formula for the symmetric groups due to A. Regev (\textit{Israel J. Math.} \textbf{195} (2013): 31--35).
\end{abstract}
\maketitle
\vspace*{-5mm}
\section{Introduction}
In \cite{Regev-2013}, Regev  presented a surprisingly beautiful formula (we refer it as the Regev formula) for the characters of the symmetric group super representations. This formula was obtained by applying the combinatorial theory of Lie superalgebras and the super analogue of the classical Schur--Weyl duality, which was first established  by Sergeev \cite{Serg} and then in more detail by Berele--Regev \cite{B-Regev}. This duality is sometimes
called Schur--Sergeev duality in the literature. In \cite{Z19}, the author  proves a quantum analogue of Regev formula and derives a simple formula for the Iwahori--Hecke algebra super character on the exterior algebra by applying the Schur--Weyl duality between the quantum superalgebras and Iwahori--Hecke algebra of type $A$, which is established independently by Moon \cite{Moon} and Mitsuhashi  \cite{Mit}.

Motivated by these and other works, we \cite{Z24} establish the Schur--Sergeev duality for the Ariki--Koike algebras. It is natural to expect that this duality may help us to understand the characters of the Ariki--Koike algebras. The purpose of this paper is to prove a Regev type formula for the characters of the Ariki--Koike algebras by using the similar arguments as the ones in  \cite{Regev-2013,Z19}.

We now explain our results in detail. Let $q$ and $\boldsymbol{u}=(u_1, \ldots,u_m)$ be indeterminates. The (generic) Ariki--Koike algebra $H_{n}(\boldsymbol{u},q)$ is the associative algebra with identity over the field $\mathbb{K}:=\mathbb{C}(\boldsymbol{u},\sqrt{q})$, generated by  $g_0, g_1, \ldots, g_{n-1}$ with the following relations: \begin{align*}
&(g_0-u_1)\cdots(g_0-u_m)=0,&&\\
&g_0g_1g_0g_1=g_1g_0g_1g_0,&&\\
&g_i^2=(1-q)g_i+q\quad \qquad\text{for }1\leq i< n,&&\\
&g_ig_j=g_jg_i\qquad\qquad\qquad\text{for }|i-j|\geq2, &&\\
 &g_ig_{i+1}g_i=g_{i+1}g_{i}g_{i+1} \quad\text{\, for }1\leq i< n-1.&&\end{align*}
The algebras $H_{m,n}(\boldsymbol{u},q)$ were first constructed by Ariki and
Koike \cite{AK} and classified as cyclotomic Hecke algebras of type $G(m,1,n)$ by Brou\'{e} and Malle \cite{B-Malle}. The quadratic relation is slightly non-standard; it relates to the  standard form by replacing $ g_i$ with $-g_i$. This negated version often yields more elegant $q$-analogues (see e.g. \cite{APR}). The subalgebra $H_n(q)$ of $H_{n}(\boldsymbol{u},q)$ generated by $g_1, \ldots, g_{n-1}$  is the generic Iwahori--Hecke algebra associated with the symmetric group $\mathfrak{S}_n$ of degree $n$.

A composition (resp. partition) $\lambda=(\lambda_1,\lambda_2,\ldots)$ of $n$, denote $\lambda\models n$ (resp. $\lambda\vdash n$), is a sequence (resp. weakly decreasing sequence) of non-negative integers with $|\lambda|=\sum_{i\geq 1}\lambda_i=n$. We define $\ell(\lambda)$ as the length of $\lambda$, i.e., the number of non-zero parts. A partition may also be denoted as $(1^{a_1}\,2^{a_2}\ldots)$, where $a_i$ is the number of parts equal to $i$. An $m$-multicomposition (resp. $m$-multipartition) of $n$ is an $m$-tuple $\boldsymbol{\lambda}=(\lambda^{(1)};\ldots;\lambda^{(m)})$ of compositions (resp. partitions) such that
   \begin{equation*}
   |\bgl|=|\lambda^{(1)}|+\cdots+|\lambda^{(m)}|=n.
 \end{equation*}
Denote by $\mathscr{P}_{m,n}$ the set of all $m$-multipartitions of $n$.  It is known that both the irreducible characters of the complex reflection group $W_{m,n}$ of type $G(m,1,n)$ (in Shephard--Todd's classification \cite{shephard-todd}) and the irreducible representations of $H_{n}(\boldsymbol{u},q)$ are parameterized by $\mathscr{P}_{m,n}$ (see \cite{Macdonald} and \cite{AK}). By slight abuse of notation, we denote  the irreducible character of $W_{m,n}$ and  $H_n(\boldsymbol{u},q)$ associated to $\bgl\in \mathscr{P}_{m,n}$ by $\chi^{\bgl}$.

Given non-negative integers $k_1,\ldots, k_m$, $\ell_1,\ldots, \ell_m$ with $k=k_1+\cdots+k_m$ and $\ell=\ell_1+\cdots+\ell_m$, put $\bk=(k_1,\ldots,k_m)$ and $\bl=(\ell_1,\ldots,\ell_m)$.  Let $\mathbb{Z}_2=\{\bar{0},\bar{1}\}=\mathbb{Z}/2\mathbb{Z}$ and let $V$ be a $\mathbb{Z}_2$-graded $\mathbb{K}$-space with $\mathrm{dim}_{\mathbb{K}}V=k|\ell$. Denote by $\chi_{\bk|\bl;n}^{\boldsymbol{u},q}$ the character of the permutation super representation $(\Phi_{\bk|\bl;n}^{\boldsymbol{u},q}, V^{\otimes n})$  of $H_n(\boldsymbol{u},q)$ (see (\ref{Equ:T0}, \ref{Equ:Super-representaion}) for details).

For each $m$-multipartition $\boldsymbol{\mu}$ of $n$, Mak \cite{Mak} defined the special standard element of $H_{n}(\boldsymbol{u},q)$ associated to $\boldsymbol{\mu}$ and showed that the characters of $H_{n}(\boldsymbol{u},q)$ are determined by their values on these elements  (see also \cite{HR98}). It is natural to determine the character $\chi_{\bk|\bl;n}^{\boldsymbol{u},q}$ via its values on the special standard
elements, while the complexity of the $g_0$-action on $V^{\otimes n}$ makes it is extremely difficult to calculate these values (see (\ref{Equ:T0})). To deal with the obstacle, we use the standard element $g_{\boldsymbol{\mu}}$ of $H_{n}(\boldsymbol{u},q)$ introduced by Shoji in \cite{S} (see (\ref{Equ:T-bmu}) for details).  Thanks to \cite[Proposition~7.5]{S},  $\chi^{\bgl}$ is uniquely determined by its values $\chi^{\bgl}(g_{\bmu})$ on $g_{\bmu}$ for $\bmu\in\mathscr{P}_{m,n}$, so is $\chi_{\bk|\bl;n}^{\boldsymbol{u},q}$.

For a positive integer $a$,  let $\mathscr{C}(a;\bk|\bl)$ be the set of pairs $(\bga;\bgb)$ of $m$-multicompositions such that $|\bga|+|\bgb|=a$, $\ell(\alpha^{(i)})\leq k_i$, and $\ell(\beta^{(i)})\leq \ell_i$ for $1\leq i\leq m$.
For $(\bga;\bgb)\in\mathscr{C}(a;\bk|\bl)$, we denote by $\ell(\bga;\bgb)$ its length, that is,
\begin{equation*}
\ell(\boldsymbol{\alpha};\boldsymbol{\beta}) = \sum_{i=1}^m \left(\ell(\alpha^{(i)}) + \ell(\beta^{(i)})\right).
\end{equation*}

Our first main result  is an explicit formula for $\chi_{\bk|\bl;n}^{\boldsymbol{u},q}$ on the standard elements of $H_n(\boldsymbol{u},q)$, which is  a cyclotomic analogue of \cite[Theorem~A]{Z19}.

\begin{theorem}\label{Them:Char-super-reps}Let $\bmu=(\mu^{(1)};\ldots;\mu^{(m)})$ be an $m$-multipartition of $n$. Then
\begin{equation*}
  \chi_{\bk|\bl;n}^{\boldsymbol{u},q}(g_{\bmu})=  \prod_{r=1}^m\!\prod_{j=1}^{\ell(\mu^{(r)})}
  \sum_{\substack{(\bga;\bgb)\in\mathscr{C}(\mu_j^{(r)};\bk|\bl)}}
   u^{r-1}_{\tilde{\ell}(\bga;\bgb)}(-q)^{|\bga|-\ell(\!\bga\!)}
   (1-q)^{\ell(\bga;\bgb)-1}\prod_{i=1}^m\tbinom{k_i}{\ell(\ga^{(i)})}
     \tbinom{\ell_i}{\ell(\!\gb^{(i)})},
  \end{equation*}
  where $\tilde{\ell}(\bga;\bgb)=\max\{i|\ell(\ga^{(i)},\gb^{(i)})>0\}$.
  \end{theorem}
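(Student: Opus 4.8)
The plan is to follow the route of \cite{Regev-2013,Z19}: factor the value $\chi_{\bk|\bl;n}^{\boldsymbol{u},q}(g_{\bmu})$ into contributions of the individual cyclic constituents of $g_{\bmu}$, evaluate one such contribution by an explicit trace computation in the permutation module $V^{\otimes n}$, and recombine. By the construction of $g_{\bmu}$ in (\ref{Equ:T-bmu}), up to conjugation (which does not affect the character) $g_{\bmu}$ is an ordered product $\prod_{r=1}^{m}\prod_{j=1}^{\ell(\mu^{(r)})}\zeta_{r,j}$ in which $\zeta_{r,j}$ is supported on a block of $\mu_j^{(r)}$ consecutive tensor slots and, on that block, is the composite of the length-$\mu_j^{(r)}$ $q$-cycle with the $(r-1)$-st power of the $g_0$-twist. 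Distinct blocks occupy pairwise disjoint slots --- this is precisely why Shoji's presentation is used --- so $\Phi_{\bk|\bl;n}^{\boldsymbol{u},q}(g_{\bmu})$ is the tensor product over the pairs $(r,j)$ of the block operators on $V^{\otimes\mu_j^{(r)}}$, and its trace therefore factors. Hence it suffices to prove that, for every $a\geq 1$ and $0\leq d\leq m-1$, the trace $X_d(a)$ on $V^{\otimes a}$ of the length-$a$ $q$-cycle twisted by $g_0^{d}$ is
\begin{equation*}
X_d(a)=\sum_{(\bga;\bgb)\in\mathscr{C}(a;\bk|\bl)}u^{d}_{\tilde{\ell}(\bga;\bgb)}(-q)^{|\bgb|-\ell(\bgb)}(1-q)^{\ell(\bga;\bgb)-1}\prod_{i=1}^{m}\tbinom{k_i}{\ell(\ga_i)}\tbinom{\ell_i}{\ell(\gb_i)};
\end{equation*}
taking $a=\mu_j^{(r)}$, $d=r-1$ and multiplying over all $(r,j)$ then yields the theorem.

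\emph{The single-cycle trace.} Fix a homogeneous basis of $V$ refining $V=\bigoplus_{i=1}^{m}\left(V^{(i)}_{0}\oplus V^{(i)}_{1}\right)$ and totally ordered so that the order respects the index $i$; this gives a tensor basis of $V^{\otimes a}$. Expand the $q$-cycle as a composite of the operators $\Phi(g_t)$, each acting on an adjacent pair $v\otimes w$ by $v\otimes w\mapsto v\otimes w$ ($v=w$ even), $v\otimes w\mapsto -q\,v\otimes w$ ($v=w$ odd), $v\otimes w\mapsto w\otimes v$ ($v<w$), $v\otimes w\mapsto w\otimes v+(1-q)\,v\otimes w$ ($v>w$); then apply $\Phi(g_0)^{d}$, which rescales the distinguished slot by $u_i^{d}$ on $V^{(i)}$. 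Tracking the diagonal contributions of the basis vectors $v_{c_1}\otimes\cdots\otimes v_{c_a}$ --- as in \cite[Theorem~A]{Z19}, now with the refined palette --- one finds that the surviving colourings are exactly those whose sequence of run colours is strictly monotone along the cycle (so the colour sequence splits into maximal runs of a common colour, the distinct colours appearing in monotone order from the distinguished slot); that the distinguished slot consequently carries the globally maximal colour, contributing the scalar $u^{d}$ of that colour; and that the remaining coefficient is the product over the $a-1$ adjacencies of the $q$-cycle of: $1$ or $-q$ at each within-run adjacency according to the parity of that run's colour, and $1-q$ at each between-run adjacency.

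\emph{Bookkeeping.} Encode a surviving colouring by $(\bga;\bgb)$, where the parts of $\ga^{(i)}$ (resp.\ $\gb^{(i)}$) are the lengths of the runs coloured by the distinct even (resp.\ odd) basis vectors of $V^{(i)}$, listed in the order the colours occur; then $(\bga;\bgb)$ ranges over $\mathscr{C}(a;\bk|\bl)$, the number of runs is $\ell(\bga;\bgb)$, and the maximal colour lies in $V^{(\tilde{\ell}(\bga;\bgb))}$. For fixed $(\bga;\bgb)$ there are $\prod_{i=1}^{m}\tbinom{k_i}{\ell(\ga_i)}\tbinom{\ell_i}{\ell(\gb_i)}$ ways to pick which basis vectors realise the colours (the monotonicity then fixing the colour-to-run assignment); of the $a-1$ adjacencies exactly $a-\ell(\bga;\bgb)$ are within-run --- giving $1$ for even runs and $(-q)^{t-1}$ for an odd run of length $t$, hence $(-q)^{|\bgb|-\ell(\bgb)}$ in all --- and $\ell(\bga;\bgb)-1$ are between-run, giving $(1-q)^{\ell(\bga;\bgb)-1}$; together with $u^{d}_{\tilde{\ell}(\bga;\bgb)}$ this is the summand of $X_d(a)$, and the first paragraph completes the proof.

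\emph{Main obstacle.} The real content is the single-cycle trace: one must check at the level of the explicit $\Phi(g_t)$-action that no colourings beyond the monotone ones survive on the diagonal, that the $q$-cycle's sorting genuinely deposits the maximal colour into the slot seen by $g_0^{d}$ (this is the origin of the subscript $\tilde{\ell}(\bga;\bgb)$ and is the new feature in the cyclotomic setting), and that the between-run adjacencies contribute precisely $(1-q)^{\ell(\bga;\bgb)-1}$ with no stray powers of $q$. By comparison, the disjoint-block reduction of the first paragraph is routine once the shape of $g_{\bmu}$ in Shoji's presentation is unwound.
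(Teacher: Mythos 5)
Your proposal is correct and takes essentially the same route as the paper: the factorization of $g_{\bmu}$ over disjoint blocks of tensor slots reduces the computation to the single twisted-cycle trace, which is exactly the paper's Lemma~\ref{Lemm:Trace-g(r,a)}, and your run-by-run diagonal analysis (only weakly increasing slot sequences survive, with local factors $1$, $-q$, $1-q$, the maximal colour landing in the distinguished slot and contributing $u^{r-1}_{\tilde{\ell}(\bga;\bgb)}$, and the binomial count of which basis vectors realise the runs) is precisely the paper's computation, spelled out in more detail than the paper itself gives. The only slip is terminological: the twist that rescales the distinguished slot by $u_i^{r-1}$ is the image $\omega_a^{r-1}$ of $\xi_a^{r-1}$ in Shoji's presentation, not a power of $\Phi(g_0)$ (whose image is the much more complicated operator $T_0$); with that relabelling your argument coincides with the paper's proof.
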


Let   $\varsigma$ be a primitive $m$-th of unity and $\boldsymbol{\varsigma}=(1, \ldots, \varsigma^{m-1})$. Then $H_{n}(\boldsymbol{\varsigma},1)\cong\mathbb{C}W_{m,n}$. It is well-known that the irreducible character $\chi^{\bgl}$ of $W_{m,n}$ is uniquely determined by its values $\chi^{\bgl}(w_{\bmu})$ on the standard elements $w_{\bmu}$ of $W_{m,n}$ for $\bmu\in\mathscr{P}_{m,n}$ (see (\ref{Equ:w-mu})). Thus we obtain

\begin{corollary}\label{Cor:W-m-n}Let $\bmu=(\mu^{(1)};\ldots;\mu^{(m)})$ be an  $m$-multipartition of $n$. Then
  \begin{equation*}
\chi_{\boldsymbol{k}|\boldsymbol{\ell};n}^{\boldsymbol{\varsigma},1}(w_{\boldsymbol{\mu}}) = \prod_{r=1}^m \prod_{j=1}^{\ell(\mu^{(r)})} \sum_{i=1}^m \left(\ell_i - (-1)^{|\mu_j^{(r)}|}k_i\right) \varsigma^{(r-1)(i-1)}.
\end{equation*}
\end{corollary}

Following Miller \cite{M17},  for $w\in W_{m,n}$  of type $\bmu=(\mu^{(1)}; \ldots; \mu^{(m)})$,  define
\begin{equation*}
  \ell(w)=\text{number of non-zero parts of }\mu^{(1)}=\ell(\mu^{(1)}).
\end{equation*}
A \emph{block characters} of $W_{m,n}$ is a character depends on $w$ only through $\ell(w)$. For example, the Foulkes characters of $W_{m,n}$ introduced by Miller in \cite{M15} are block characters of $W_{r,n}$. Inspired by \cite{GGK}, we yield the following specializations of Corollary~\ref{Cor:W-m-n}, which enables us to understand the Foulkes characters of $W_{m,n}$ by applying the Schur--Sergeev duality in \cite{Z?}.

\begin{corollary}\label{Cor:Block}Let $k$ be a non-negative integer, $\boldsymbol{k}=(k+1,k^{m-1})$, and $\boldsymbol{0}=(0^m)$. Then
  \begin{eqnarray*}
 &&\chi_{\boldsymbol{0}|\boldsymbol{k};n}^{\boldsymbol{\varsigma},1}(w)
  =(mk+1)^{\ell(w)},\\ &&\chi_{\boldsymbol{k}|\boldsymbol{0};n}^{\boldsymbol{\varsigma},1}(w)
  =(-1)^{n+m-1}(-mk-1)^{\ell(w)}
\end{eqnarray*}
for any $w\in W_{m,n}$.
\end{corollary}

For non-negative integer $a$, we define $[a]_{-q}:=\frac{1-(-q)^a}{1+q}$  and write $[a]_{-q}$ as $[a]$ when there is no confusion. A partition $\lambda=(\lambda_1,\lambda_2,\ldots)$ is a \emph{hook partition} if $\lambda_2\leq 1$. Let $\boldsymbol{1}_m=(1^m)$ and  $H(\boldsymbol{1}_m|\boldsymbol{1}_m;n)$ be the set of $m$-multipartitions of $n$ with each component being a hook partition, that is,
 \begin{equation*}
   H(\boldsymbol{1}_m|\boldsymbol{1}_m;n)=\left\{(\lambda^{(1)};\ldots;\lambda^{(m)})
   \vdash n\left|\lambda^{(i)}\text{ is a hook partition for } i=1, \ldots,m\right\}\right..
 \end{equation*}
 For $\bgl\in\mathscr{P}_{m,n}$, let $\#\bgl$ denote the number of its non-zero components. By \cite[Theorem~4.9]{Z24}, $\chi_{\boldsymbol{1}_m|\boldsymbol{1}_m;n}^{\boldsymbol{u},q}$ can be expressed as the sum of the characters of $H(\boldsymbol{u},q)$ labeled by $H(\boldsymbol{1}_m|\boldsymbol{1}_m;n)$ with certain multiplicities. As a consequence, we get the following generalization of \cite[Theorem~B]{Z19}.
\begin{corollary}\label{Cor:mu-1-1}Let $\bmu=(\mu^{(1)};\ldots;\mu^{(m)})$ be an $m$-multipartition of $n$. Then
  \begin{equation*}
  \sum_{\bgl\in H(\boldsymbol{1}_m|\boldsymbol{1}_m;n)}\!\!2^{\#\bgl}\chi^{\bgl}(g_{\bmu})\!=
  \!2^{\ell(\bmu)}\!\prod_{r=1}^m\!\prod_{j=1}^{\ell(\mu^{(r)})}
\sum_{i=1}^m\left([\mu^{(r)}_j]\!+\!(i\!-\!1)\mu_j^{(r)}[\mu_j^{(r)}-1](1\!-\!q)\right)
u_i^{r\!-\!1}\!+\!O((1\!-\!q)^2),
\end{equation*}
where $O((1-q)^2)$ denotes the remainder terms with factor $(1-q)^2$.
\end{corollary}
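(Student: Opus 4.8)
The plan is to turn the left-hand side into a single permutation-super-character value and then expand the closed formula of Theorem~\ref{Them:Char-super-reps} to first order in $1-q$.

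First I would invoke \cite[Theorem~4.9]{Z24}, which writes $\chi_{\boldsymbol{1}_m|\boldsymbol{1}_m;n}^{\boldsymbol{u},q}=\sum_{\bgl\in H(\boldsymbol{1}_m|\boldsymbol{1}_m;n)}2^{\#\bgl}\chi^{\bgl}$; hence the left-hand side of the corollary equals $\chi_{\boldsymbol{1}_m|\boldsymbol{1}_m;n}^{\boldsymbol{u},q}(g_{\bmu})$, which is precisely what Theorem~\ref{Them:Char-super-reps} computes. Specialising that theorem to $\bk=\bl=\boldsymbol{1}_m$ (so $k_i=\ell_i=1$ for all $i$) makes every binomial coefficient $\tbinom{k_i}{\ell(\alpha^{(i)})}\tbinom{\ell_i}{\ell(\beta^{(i)})}$ equal to $1$, and turns a pair $(\bga;\bgb)\in\mathscr{C}(a;\boldsymbol{1}_m|\boldsymbol{1}_m)$ into the datum of two subsets $S,T\subseteq\{1,\dots,m\}$ together with positive integers $(a_i)_{i\in S}$ and $(b_i)_{i\in T}$ with $\sum_{i\in S}a_i+\sum_{i\in T}b_i=a$; here $\ell(\bga;\bgb)=|S|+|T|$, $\tilde{\ell}(\bga;\bgb)=\max(S\cup T)$ and $|\bgb|-\ell(\bgb)=\sum_{i\in T}(b_i-1)$. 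So the task reduces to expanding, for each part $a=\mu_j^{(r)}$,
\[
F_r(a):=\sum_{\substack{S,T\subseteq\{1,\dots,m\}\\ (a_i)_{i\in S},\,(b_i)_{i\in T}}}u_{\max(S\cup T)}^{\,r-1}\,(-q)^{\sum_{i\in T}(b_i-1)}\,(1-q)^{|S|+|T|-1}
\]
modulo $(1-q)^2$, and then multiplying these factors over $1\le r\le m$ and $1\le j\le\ell(\mu^{(r)})$.

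For the expansion, the factor $(1-q)^{|S|+|T|-1}$ forces only configurations with $|S|+|T|\le 2$ to contribute modulo $(1-q)^2$, and those with $|S|+|T|=2$ may be evaluated at $q=1$. The part $|S|+|T|=1$ gives $\big(1+(-q)^{a-1}\big)\sum_{i=1}^m u_i^{\,r-1}$ exactly. For $|S|+|T|=2$ I would split into the three types $(|S|,|T|)\in\{(2,0),(1,1),(0,2)\}$: the pure-$\alpha$ and pure-$\beta$ types run over $2$-subsets $\{i_1<i_2\}$ with $a-1$ ways of splitting the size, while the mixed type runs over \emph{all} ordered pairs $(i_1,i_2)\in\{1,\dots,m\}^2$ and carries the geometric sum $\sum_{b=1}^{a-1}(-q)^{b-1}$ in the size of the $T$-block. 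Inserting the elementary identities $\sum_{1\le i_1<i_2\le m}u_{i_2}^{\,r-1}=\sum_{i=1}^m(i-1)u_i^{\,r-1}$ and $\sum_{1\le i_1,i_2\le m}u_{\max(i_1,i_2)}^{\,r-1}=\sum_{i=1}^m(2i-1)u_i^{\,r-1}$, together with $(-q)^{a-1}\equiv(-1)^{a-1}\big(1-(a-1)(1-q)\big)$ and $\sum_{d=0}^{a-2}(-1)^d=\tfrac12\big(1+(-1)^a\big)$, the cases $a$ odd and $a$ even should collapse to the single congruence
\[
F_r(a)\equiv 2\sum_{i=1}^m\Big([a]+a(i-1)(1-q)[a-1]\Big)u_i^{\,r-1}\pmod{(1-q)^2},
\]
where, in accordance with the negated Hecke relation $g_i^2=(1-q)g_i+q$, the bracket denotes the $q$-integer in $-q$, namely $[a]=\frac{1-(-q)^a}{1+q}$. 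Note that when $a$ is odd the factor $[a-1]$ is itself divisible by $1-q$, so the correction term drops out modulo $(1-q)^2$, whereas when $a$ is even it is exactly what yields the contribution $a(i-1)(1-q)[a-1]$.

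To finish, multiplying the congruences $F_r(\mu_j^{(r)})\equiv\cdots$ over all pairs $(r,j)$ collects the scalar $2$ from each factor into $2^{\sum_r\ell(\mu^{(r)})}=2^{\ell(\bmu)}$, and since congruence modulo $(1-q)^2$ in $\mathbb{C}[\boldsymbol{u},q]$ is stable under finite products, this produces exactly the asserted formula (there $a$ is the part $\mu_j^{(r)}$). The hard part is the bookkeeping in the order-$(1-q)$ expansion of $F_r(a)$: one must track the statistic $\tilde{\ell}$ across the three two-block types, recall that in the mixed type the index pair ranges over all of $\{1,\dots,m\}^2$ rather than over strictly increasing pairs, and verify that the parity-dependent pieces reassemble into the uniform expression above; the remaining steps are routine geometric-series manipulations.
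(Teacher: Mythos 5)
Your proposal is correct and takes essentially the same route as the paper: it reduces the left-hand side to $\chi_{\boldsymbol{1}_m|\boldsymbol{1}_m;n}^{\boldsymbol{u},q}(g_{\bmu})$ and then expands Theorem~\ref{Them:Char-super-reps} at $\bk=\bl=\boldsymbol{1}_m$ modulo $(1-q)^2$ by keeping only the one- and two-block configurations, which is precisely the paper's computation of $\Theta_1(i,a)$ and $\Theta_2(i,a)$, with the same counts $(i-1)$ and $(2i-1)$ and the same $(-q)$-integer reading of the bracket. The only minor difference is one of citation bookkeeping: the multiplicity $s_{\boldsymbol{1}_m|\boldsymbol{1}_m}(\bgl)=2^{\#\bgl}$ is quoted in the paper from \cite[Theorem~6.24]{B-Regev}, while \cite[Theorem~4.9]{Z24} supplies only the bimodule decomposition with multiplicities $s_{\bk|\bl}(\bgl)$.
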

Furthermore, we obtain a wreath analogue of \cite[Proposition~1.1]{Regev-2013}.
\begin{corollary}\label{Cor:mu-1-1-wreath}Let $\bmu=(\mu^{(1)};\ldots;\mu^{(m)})$ be an $m$-multipartition of $n$. Then
 \begin{equation*}
  \sum_{\bgl\in H(\boldsymbol{1}_m|\boldsymbol{1}_m;n)}2^{\#\bgl}\chi^{\bgl}(w_{\bmu})=
 \left\{\begin{array}{ll}\vspace{2\jot}
 (2m)^{\ell(\mu^{(1)})},&\text{if $\mu^{(1)}\vdash n$ with each part being odd};\\
  0,&\text{otherwise}.
  \end{array}\right.
\end{equation*}
\end{corollary}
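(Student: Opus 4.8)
The plan is to obtain Corollary~\ref{Cor:mu-1-1-wreath} as the specialisation of Corollary~\ref{Cor:mu-1-1} at $\boldsymbol{u}=\boldsymbol{\varsigma}$ and $q=1$. First I would set $q=1$ in the formula of Corollary~\ref{Cor:mu-1-1}: the error term $O(1-q)^2$ vanishes, and in each inner summand the contribution $a(i-1)(1-q)[\mu_j^{(r)}-1]$ also vanishes since it carries a factor $(1-q)$, while $[\mu^{(r)}_j]_q$ specialises to the integer $\mu^{(r)}_j$. Hence at $q=1$ the right-hand side collapses to $2^{\ell(\bmu)}\prod_{r=1}^m\prod_{j=1}^{\ell(\mu^{(r)})}\big(\mu^{(r)}_j\sum_{i=1}^m\varsigma^{(r-1)(i-1)}\big)$. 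Now I invoke the elementary character-sum identity $\sum_{i=1}^m\varsigma^{(r-1)(i-1)}=\sum_{i=0}^{m-1}\varsigma^{(r-1)i}$, which equals $m$ when $r=1$ (all terms are $1$) and equals $0$ for $2\le r\le m$, because then $\varsigma^{r-1}$ is a nontrivial $m$-th root of unity and the sum is a full geometric series $\frac{\varsigma^{m(r-1)}-1}{\varsigma^{r-1}-1}=0$.

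Consequently the whole product over $r$ is zero unless the factors indexed by $r=2,\dots,m$ are empty products, i.e.\ unless $\ell(\mu^{(r)})=0$ for all $r\ge 2$; equivalently $\mu^{(2)}=\dots=\mu^{(m)}=\varnothing$, which forces $\mu^{(1)}\vdash n$. In that surviving case $\ell(\bmu)=\ell(\mu^{(1)})$, the $r=1$ product has $\ell(\mu^{(1)})$ factors each equal to $\mu^{(1)}_j\cdot m$, so the right-hand side becomes $2^{\ell(\mu^{(1)})}\,m^{\ell(\mu^{(1)})}\prod_{j=1}^{\ell(\mu^{(1)})}\mu^{(1)}_j=(2m)^{\ell(\mu^{(1)})}\prod_j\mu^{(1)}_j$. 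On the left-hand side, specialising $\boldsymbol{u}=\boldsymbol{\varsigma}$ and $q=1$ turns each $\chi^{\bgl}(g_{\bmu})$ into $\chi^{\bgl}(w_{\bmu})$ by the identification $H_n(\boldsymbol{\varsigma},1)\cong\mathbb{C}W_{m,n}$ already recorded before Corollary~\ref{Cor:W-m-n}, matching standard elements $g_{\bmu}\mapsto w_{\bmu}$; so the left-hand side is exactly $\sum_{\bgl\in H(\boldsymbol{1}_m|\boldsymbol{1}_m;n)}2^{\#\bgl}\chi^{\bgl}(w_{\bmu})$.

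It remains to see why $\prod_{j}\mu^{(1)}_j=1$ is replaced by the parity condition in the statement, i.e.\ why we actually get $(2m)^{\ell(\mu^{(1)})}$ precisely when every part of $\mu^{(1)}$ is odd and $0$ otherwise. The point is that Corollary~\ref{Cor:mu-1-1} as written only records the value \emph{modulo} $(1-q)^2$, which pins down the linear-in-$(1-q)$ behaviour but not the exact value at $q=1$; to get the sharp statement one should instead specialise at $q=1$ directly inside the identity $\sum_{\bgl\in H}2^{\#\bgl}\chi^{\bgl}(g_{\bmu})=\chi^{\boldsymbol{u},q}_{\boldsymbol{1}_m|\boldsymbol{1}_m;n}(g_{\bmu})$ coming from \cite[Theorem~4.9]{Z24}, and then use Theorem~\ref{Them:Char-super-reps} with $\bk=\bl=\boldsymbol{1}_m$, $\boldsymbol{u}=\boldsymbol{\varsigma}$, $q=1$ (equivalently Corollary~\ref{Cor:W-m-n}). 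Indeed, from Corollary~\ref{Cor:W-m-n} with $k_i=\ell_i=1$ the value $\chi^{\boldsymbol{\varsigma},1}_{\boldsymbol{1}_m|\boldsymbol{1}_m;n}(w_{\bmu})$ equals $\prod_{r=1}^m\prod_{j=1}^{\ell(\mu^{(r)})}\sum_{i=1}^m\big(1-(-1)^{|\mu^{(r)}_j|}\big)\varsigma^{(r-1)(i-1)}$; each factor is $0$ as soon as some part $\mu^{(r)}_j$ is even (the bracket $1-(-1)^{|\mu^{(r)}_j|}$ is $0$), is $2\sum_i\varsigma^{(r-1)(i-1)}=0$ when all parts of $\mu^{(r)}$ are odd but $r\ge 2$, and is $(2m)^{\ell(\mu^{(1)})}$ when $r=1$ with all parts of $\mu^{(1)}$ odd and $\mu^{(r)}=\varnothing$ for $r\ge2$. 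This is exactly the asserted dichotomy.

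The main obstacle is purely bookkeeping: reconciling the two routes to the left-hand side — making sure that specialising $\boldsymbol{u}\mapsto\boldsymbol{\varsigma}$, $q\mapsto 1$ is legitimate termwise in \cite[Theorem~4.9]{Z24} (the multiplicities $2^{\#\bgl}$ are independent of $\boldsymbol{u},q$, so this is fine) and that the standard elements $g_{\bmu}$ of $H_n(\boldsymbol{u},q)$ really do specialise to the group elements $w_{\bmu}$ under the isomorphism $H_n(\boldsymbol{\varsigma},1)\cong\mathbb{C}W_{m,n}$, which is the content of the discussion preceding Corollary~\ref{Cor:W-m-n}. Once those identifications are in place, the computation is the short geometric-sum argument above, and the parity condition emerges automatically from the factor $1-(-1)^{|\mu^{(r)}_j|}$ in Corollary~\ref{Cor:W-m-n}.
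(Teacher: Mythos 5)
Your proposal is correct and, in its final form, is essentially the paper's own proof: the left-hand side is identified with $\chi^{\boldsymbol{\varsigma},1}_{\boldsymbol{1}_m|\boldsymbol{1}_m;n}(w_{\bmu})$ via $s_{\boldsymbol{1}_m|\boldsymbol{1}_m}(\bgl)=2^{\#\bgl}$, and then Corollary~\ref{Cor:W-m-n} with $k_i=\ell_i=1$ together with the vanishing of $\sum_{i=1}^m\varsigma^{t(i-1)}$ for $1\le t<m$ yields the parity dichotomy, exactly as in the paper. One small remark: your aside claiming that Corollary~\ref{Cor:mu-1-1} cannot pin down the value at $q=1$ rests on reading $[\mu^{(r)}_j]$ as the $q$-integer $[\mu^{(r)}_j]_q$; in the paper it is $[\mu^{(r)}_j]_{-q}$, which at $q=1$ becomes the parity indicator $\tfrac{1-(-1)^{\mu^{(r)}_j}}{2}$, so that specialization route is also consistent (the $O(1-q)^2$ term does vanish at $q=1$) --- but this does not affect the validity of the argument you actually give.
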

 Let $H_n(u,q)$ be the Iwahori--Hecke algebra of type $B_n$ with parameters $u_1=1$ and $u_2=u$. Then we obtain type $B$ analogues of \cite[Theorem~B]{Z19} and \cite[Proposition~1.1]{Regev-2013}.

\begin{theorem}\label{Them:Pair-Regeev}Let\begin{equation*}
  \chi_n:=\frac{1}{2}\sum_{a=0}^{n-1}\left(\chi^{\left((n-a,1^a);\emptyset\right)}+
  \chi^{\left(\emptyset;(n-a,1^a)\right)}\right)+
  \sum_{a=1}^{n-1}\sum_{i=0}^a\sum_{j=0}^{n-a}\chi^{\left((a,1^{a-i});(n-a,1^{n-a-j})\right)}.
\end{equation*}
For any 2-multipartition  $\bmu=(\alpha;\beta)$ of $n$, we have
\begin{align*}
\chi_n(g_{\bmu})\!&=\displaystyle{2^{\ell(\bmu)\!-\!1}\!\prod_{i=1}^{\ell(\alpha)}\prod_{j=1}^{\ell(\beta)}\!\!
\biggl([\alpha_i][\beta_j]\!+\!\left(\alpha_i[\alpha_i\!-\!1][\beta_j]+
[\alpha_i]\beta_j[\beta_j-1]u\right)(1\!-\!q)\biggr)+\!O((1\!-\!q)^2)},
\end{align*}
 and
 \begin{equation*}
  \chi_n(w_{\bmu})=\left\{\begin{array}{ll}\vspace{2\jot}
  \frac{1}{2}(2m)^{\ell(\alpha)},&\text{if $\alpha\vdash n$ with each part being odd};\\
  0,&\text{otherwise}.
  \end{array}\right.
\end{equation*}
\end{theorem}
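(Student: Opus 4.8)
The plan is to deduce Theorem~\ref{Them:Pair-Regeev} from Corollaries~\ref{Cor:mu-1-1} and~\ref{Cor:mu-1-1-wreath} specialized to $m=2$ with $(u_1,u_2)=(1,u)$, once $\chi_n$ has been identified, up to an explicit scalar, with the character $\chi_{\boldsymbol{1}_2|\boldsymbol{1}_2;n}^{\boldsymbol{u},q}$ rewritten via \cite[Theorem~4.9]{Z24}. So first I would enumerate $H(\boldsymbol{1}_2|\boldsymbol{1}_2;n)$: a pair $(\lambda^{(1)};\lambda^{(2)})$ of hook partitions with $|\lambda^{(1)}|+|\lambda^{(2)}|=n$ is either supported in a single component --- a hook of $n$ in the first slot or in the second, the hooks of $n$ being exactly the $(n-a,1^a)$ with $0\le a\le n-1$ --- or it has both components nonzero, each then a genuine hook $(b,1^c)$ with $b\ge 1$. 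Weighting this stratification by the multiplicities $2^{\#\bgl}$ occurring on the left-hand side of Corollary~\ref{Cor:mu-1-1} and matching it against the two sums defining $\chi_n$ (the single-component terms entering $\chi_n$ with one coefficient and the two-component terms with another, in the ratio $2^{1}:2^{2}$) would identify $\chi_n=c\,\chi_{\boldsymbol{1}_2|\boldsymbol{1}_2;n}^{\boldsymbol{u},q}$ for the constant $c$ fixed by that ratio and the chosen normalization.

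Granting this, I would obtain the $g_{\bmu}$ formula by putting $m=2$, $u_1=1$, $u_2=u$ in Corollary~\ref{Cor:mu-1-1}: the inner sum $\sum_{i=1}^{m}$ there has only two terms, and for $r=1$ both $u_i^{0}$ equal $1$ so the sum collapses, while for $r=2$ it produces the combination involving $u$ appearing in the statement; collecting the resulting powers of $2$ against the prefactor $2^{\ell(\bmu)}$ and multiplying by $c$ then gives the claimed product, the remainder $O(1-q)^2$ being inherited verbatim. For the $w_{\bmu}$ formula I would pass to the group-algebra specialization $q=1$, $\boldsymbol{u}=\boldsymbol{\varsigma}=(1,\varsigma)$ with $\varsigma=-1$ the primitive square root of unity, and invoke Corollary~\ref{Cor:mu-1-1-wreath} at $m=2$: its right-hand side is $(2m)^{\ell(\mu^{(1)})}$ when $\mu^{(1)}\vdash n$ has all parts odd and $0$ otherwise, and since $\mu^{(2)}=\emptyset$ in the first case one has $\ell(\bmu)=\ell(\mu^{(1)})$, so multiplying by $c$ yields the stated dichotomy.

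The one step that is not pure transcription is the first: correctly listing the hook-partition pairs of $n$ and pinning down the scalar $c$, and then carrying out the elementary but slightly fiddly simplification of the two-term inner sums in Corollary~\ref{Cor:mu-1-1} --- in particular keeping track of where the factor coming from the $r=1$ summation lands, so that it matches the prefactor $2^{\ell(\bmu)-1}$ in the final formula. I do not expect a conceptual obstacle: once $\chi_n$ has been matched to $\chi_{\boldsymbol{1}_2|\boldsymbol{1}_2;n}^{\boldsymbol{u},q}$, the theorem is a direct specialization of results already established above.
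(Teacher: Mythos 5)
Your proposal follows essentially the paper's own route: the paper likewise observes that pairs of hook partitions summing to $n$ are exactly the elements of $H(\boldsymbol{1}_2|\boldsymbol{1}_2;n)$, identifies $\chi_n$ with (a multiple of) $\sum_{\bgl\in H(\boldsymbol{1}_2|\boldsymbol{1}_2;n)}2^{\#\bgl}\chi^{\bgl}=\chi^{\boldsymbol{u},q}_{\boldsymbol{1}_2|\boldsymbol{1}_2;n}$, and then concludes by specializing Corollaries~\ref{Cor:mu-1-1} and~\ref{Cor:mu-1-1-wreath} to $m=2$ with $(u_1,u_2)=(1,u)$ and $\varsigma=-1$. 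Your explicit tracking of the normalizing scalar $c$ is, if anything, slightly more careful than the paper, which asserts the identification directly; there is no substantive difference in approach.
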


Let us remark that the Regev formula and its quantum version can be proved by applying the Murnaghan--Nakayama formula for the characters of the symmetric groups and the Iwahori--Hecke algebras of type $A_n$ \cite{Taylor,Z19}.  We hope that the Murnaghan--Nakayama rule for the characters of the Ariki--Koike algebras \cite[Theorem~2.15]{HR98} would provide an alternative proof of the Regev formula for the Ariki--Koike algebras. Note that the Murnaghan--Nakayama rule for Ariki--Koike algebras is formulated in terms of the special standard elements, which are different from the standard elements used here. Recently Jing--Liu \cite{Jing} derived the quantum Regev formula using vertex operators, it is natural to expect a similar approach may be provide a new proof of Theorem~\ref{Them:Pair-Regeev}.

In \cite{LP}, L\"{u}beck--Prasad related characters of $W_{2,n}$ to those of $\mathfrak{S}_{2n}$ via Frobenius's formula. Adin--Roichman \cite{AR} combinatorially generalized this identity. A connection between Regev formulas for the Iwahori--Hecke algebra and the Ariki--Koike algebras, particularly, relating $W_{2,n}$ and  $\mathfrak{S}_{2n}$ is worth exploring (see Corollary~\ref{Cor:alpha-0}).  Finally, it is expected that determining $\chi_{\bk|\bl;n}^{\boldsymbol{u},q}$ via its values on the special standard elements could yield a more compact Regev formula (especially for  type $B$) and  (super) Frobenius formula for the Ariki--Koike algebras.

 This paper is organized as follows. In Section~\ref{Sec:Preliminary},
 we review briefly the permutation super
representation and the Schur--Sergeev duality for the Ariki--Koike algebra. In Sections 3, we introduce the standard elements of $H_n(\boldsymbol{u}, q)$ and prove Theorem~\ref{Them:Char-super-reps} in Section 4.  Section~\ref{Sec:Hook-partitions} reinterprets the character of super permutation representation as a sum (up to certain multiplicities) of characters of the
Ariki--Koike algebra indexed by $(\bk,\bl)$-hook $m$-multipartitions.  Corollaries~\ref{Cor:W-m-n}--\ref{Cor:mu-1-1-wreath} and Theorem~\ref{Them:Pair-Regeev} are proved in the final section.

\section{Preliminaries}\label{Sec:Preliminary}
In this section, we briefly review  the permutation super representation and the Schur--Sergeev duality for the Ariki--Koike algebras.

By a \textit{superspace} over $\mathbb{C}$ we mean a $\mathbb{Z}_2$-graded
$\mathbb{C}$-space $W$ with a decomposition
into two subspaces $W = W_{\bar{0}}\oplus W_{\bar{1}}$.  A non-zero element $x$ of
$W_i$ will be called \textit{homogeneous} and we denote its degree by
$\overline{x}=i\in \mathbb{Z}_2$. We let   $\mathrm{dim\,}_{\mathbb{C}}W:=\mathrm{dim\,}_{\mathbb{C}}W_{\bar{0}}
|\mathrm{dim\,}_{\mathbb{C}}W_{\bar{1}}$ be the dimension of $W$. We will
 view  $\mathbb{C}$ as a superspace concentrated in degree $\bar{0}$.

For $i=1,\ldots,m$, let  $V^{(i)}$  be a superspace over $\mathbb{K}$ with $\dim_{\mathbb{K}} V^{(i)}=k_i|\ell_i$ and let
\begin{equation*}
  \mathfrak{B}^{(i)}=\left\{v_1^{(i)}, \ldots, v_{k_i}^{(i)}, v_{k_i+1}^{(i)}, \ldots, v_{k_i+\ell_i}^{(i)}\right\}
\end{equation*}
be a homogeneous $\mathbb{K}$-basis of $V^{(i)}$ such that
\begin{equation*}
  V_{\bar{0}}^{(i)}=\oplus_{a=1}^{k_i} \mathbb{K}v_a^{(i)}\quad\text{ and }\quad V_{\bar{1}}^{(i)}=\oplus_{a=k_i+1}^{k_i+\ell_i} \mathbb{K}v_a^{(i)}.
\end{equation*}
Clearly the union $\mathfrak{B}=\mathfrak{B}^{(1)}\sqcup\cdots\sqcup\mathfrak{B}^{(m)}$ forms a homogeneous $\mathbb{K}$-basis of $V=V^{(1)}\oplus\cdots\oplus V^{(m)}$.
The vectors in $\mathfrak{B}^{(i)}$ are said to be \textit{color} of $i$. Furthermore, we linearly order the basis vectors by the rule\begin{eqnarray*}&& v_{a}^{(i)}<v_{b}^{(j)}\text{ if and only if either }i<j\text{ or }i=j\text{ and }a<b.                                                          \end{eqnarray*}

For $i=1, \ldots,m$, let $d_i=\sum_{j=1}^i(k_j+\ell_j)$ and relabel the basis vectors as follows: \begin{equation*}
  \begin{array}{cccccccc}
  v_{1}^{(1)}& \cdots & v_{k_1+\ell_1}^{(1)}&\cdots&\cdots& v_{1}^{(m)}&\cdots& v_{k_m+\ell_m}^{(m)}\\
  \updownarrow& \vdots & \updownarrow &\cdots&\cdots& \updownarrow & \vdots & \updownarrow\\\vspace{2\jot}
  v_1& \cdots & v_{d_1}&\cdots&\cdots & v_{d_{m-1}+1} & \cdots& v_{d_m}.
  \end{array}
\end{equation*}
 Then $\mathfrak{B}=\{v_1, \ldots, v_{k+\ell}\}$ is a homogeneous basis of $V$.

Let \begin{align*}\mathcal{I}(k|\ell;n)=\{\bi=(i_1, \ldots, i_n)|1\leq i_t\leq k+\ell, 1\leq t\leq n\}.\end{align*}
For $\bi=(i_1, \ldots, i_n)\in \mathcal{I}(k|\ell;n)$, we write $v_{\bi}=v_{i_1}\otimes \cdots\otimes v_{i_n}$, $\overline{i_j}:=\overline{v_{i_j}}$, and define $c_a(v_{\bi})=b$ if $v_{i_a}$ is being of color $b$. Then $\mathfrak{B}^{\otimes n}=\{v_{\bi}|\bi\in \ci(k|\ell;n)\}$ forms a homogeneous basis of $V^{\otimes n}$, with $\bar{\bi}=\sum_{j=1}^n\overline{i_j}$ and $c_a(\bi)\leq c_{b}(\bi)$ whenever $i_a\leq i_b$.

The Lie superalgebra $\mathfrak{gl}(k_i|\ell_i)$ consists of $(k_i+\ell_i)\times (k_i+\ell_i)$ matrices with $\mathbb{Z}_2$-grading: \begin{eqnarray*} \mathfrak{gl}(k_i|\ell_i)_{\bar{0}}&=&\left\{\left.\left(\begin{array}{cc}\boldsymbol{A} & \boldsymbol{0} \\ \boldsymbol{0} & \boldsymbol{D}\end{array}\right)\right|\boldsymbol{A}\in \mathrm{M}_{k_i\times k_i}(\mathbb{C}), \boldsymbol{D}\in \mathrm{M}_{\ell_i\times \ell_i}(\mathbb{C})\right\},\\ \mathfrak{gl}(k_i|\ell_i)_{\bar{1}}&=&\left\{\left.\left(\begin{array}{cc}\boldsymbol{0}& \boldsymbol{B} \\ \boldsymbol{C} & \boldsymbol{0}\end{array} \right)\right|\boldsymbol{B}\in \mathrm{M}_{k_i\times \ell_i}(\mathbb{C}), \boldsymbol{C}\in \mathrm{M}_{\ell_i\times k_i}(\mathbb{C})\right\}                                      \end{eqnarray*}
and Lie bracket $[\boldsymbol{X},\boldsymbol{Y}]:=\boldsymbol{XY}-(-1)^{\overline{\boldsymbol{X}}
\,\overline{\boldsymbol{Y}}}\boldsymbol{YX}$ for homogeneous $\boldsymbol{X},\boldsymbol{Y}$.

Let $\mathfrak{g}=\mathfrak{gl}(k_1|\ell_1)\oplus\cdots\oplus\mathfrak{gl}(k_m|\ell_m)$ and
$U_q(\mathfrak{g})=U_q(\mathfrak{gl}(k_1|\ell_1))\otimes\cdots\otimes U_q(\mathfrak{gl}(k_m|\ell_m))$,
where $U_q(\mathfrak{gl}(k_i|\ell_i))$ is the quantum enveloping superalgebra of $\mathfrak{gl}(k_i|\ell_i)$ defined in \cite{BKK}. By \cite[(2.15)]{Z24}, $V^{\otimes n}$ is a $U_q(\mathfrak{g})$-modules, we  denote it by  $(\Psi_n, V^{\otimes n})$.

Now we define  $\omega\in\mathrm{End}_{\mathbb{K}}(V)$ and $T, S\in \mathrm{End}_{\mathbb{K}}(V^{\otimes 2})$ as follows:

\begin{align*}
  \omega(v)=u_{c(v)}v,
\end{align*}
\begin{eqnarray}\label{Equ:T}
  &&T(i_1,i_2):=\left\{\begin{array}{ll}\vspace{2\jot}(1-q)(i_1,i_2)
 -(-1)^{\overline{i_1}\,\overline{i_{2}}}\sqrt{q}(i_2,i_1),& \text{if }i_1<i_{2};\\ \vspace{2\jot}
 \frac{(1-q)-(-1)^{\overline{i_1}}(1+q)}{2}(i_1,i_1),& \text{if }i_1=i_{2}; \\
  -(-1)^{\overline{i_1}\,\overline{i_{2}}}\sqrt{q}(i_2,i_1),&\text{if } i_1>i_{2}.
                        \end{array}\right.\end{eqnarray}
\begin{eqnarray*}
  &&S(i_1,i_2):=\left\{\begin{array}{ll}\vspace{2\jot}
 T(i_1,i_2), & \hbox{ if } c(i_1)= c(i_2);\\
 (-1)^{\overline{i_1}\,\overline{i_{2}}}(i_2,i_1), & \hbox{ if } c(i_1)\neq c(i_2).\end{array}\right.\end{eqnarray*}
It is easy to show that $T^2=(q-1)T+q$, which implies $T$ is invertible and
 \begin{eqnarray*}\label{Equ:T^{-1}}
  T^{-1}(i_1,i_2)&=&\left\{\begin{array}{ll}\vspace{2\jot}
 -\frac{(-1)^{\overline{i_1}\,\overline{i_{2}}}}{\sqrt{q}}(i_2,i_1),& \text{if }i_1<i_{2};\\\vspace{2\jot}
 \frac{(q-1)+(-1)^{\overline{i_1}}(1+q)}{2q}(i_1,i_1),& \text{if }i_1=i_{2}; \\
  \frac{q-1}{q}(i_1,i_2)-\frac{(-1)^{\overline{i_1}\,\overline{i_{2}}}}{\sqrt{q}}(i_2,i_1),
  &\text{if } i_1>i_{2}.
   \end{array}\right.\end{eqnarray*}
For $i=1,\ldots,n-1$,  we define \begin{eqnarray*}
\label{Equ:T-def}&&T_i^{\pm1}:=\mathrm{Id}^{\otimes(i-1)}\otimes T^{\pm1}\otimes \mathrm{Id}^{\otimes(n-i-1)},\end{eqnarray*}
\begin{eqnarray*}
\label{Equ:S-def}&&S_i:=\mathrm{Id}^{\otimes(i-1)}\otimes S\otimes \mathrm{Id}^{\otimes(n-i-1)},\end{eqnarray*}
\begin{align*}
\label{Equ:Omega-def}\omega_i&:=\mathrm{Id}^{\otimes(i-1)}\otimes \omega\otimes \mathrm{Id}^{\otimes(n-i)},
\end{align*}
\begin{eqnarray}
\label{Equ:T0}&&T_0:=T_{1}^{-1}\cdots T_{n-1}^{-1}S_{n-1}\cdots S_{1} \omega_1. \end{eqnarray}
 \cite[Theorem~3.6]{Z24} shows that the map
 \begin{equation}\label{Equ:Super-representaion}
   \Phi_{\bk|\bl;n}^{\boldsymbol{u},q}: H_n(\boldsymbol{u},q)\rightarrow \mathrm{End}_{\mathbb{K}}(V^{\otimes n}), \qquad g_i\mapsto T_i, \quad i=0,1, \ldots,n-1,
 \end{equation} is a representation of $H_n(\boldsymbol{u},q)$, which is called \emph{the permutation super representation} of $H_n(\boldsymbol{u},q)$.

\begin{theorem}[\protect{\cite[Theorem~4.9]{Z24}}]\label{Them:Schur-Sergeev}The following Schur--Sergeev duality holds:
  \begin{eqnarray*}
    \Psi_{n}(U_q(\mathfrak{g}))=\mathrm{End}_{H_n(\boldsymbol{u},q)}(V^{\otimes n}) &\quad\text{ and }\quad& \Phi_{\bk|\bl;n}^{\boldsymbol{u},q}(H_n(\boldsymbol{u},q))
    =\mathrm{End}_{U_q(\mathfrak{g})}(V^{\otimes n}).
  \end{eqnarray*}
\end{theorem}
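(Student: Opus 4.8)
The plan is to establish the two displayed equalities as a double centralizer statement, reducing the essential half to the already-known type $A$ quantum Schur--Sergeev duality. Throughout write $\Phi$ for $\Phi_{\bk|\bl;n}^{\boldsymbol{u},q}$ and $\Psi$ for $\Psi_n$.

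First I would record that $\Phi$ and $\Psi$ commute. That $\Phi$ is a representation of $H_n(\boldsymbol{u},q)$ is \cite[Theorem~3.6]{Z24}, and $\Psi$ is a representation of $U_q(\mathfrak g)$ via the Hopf structure. Since $\Phi(H_n(\boldsymbol{u},q))$ is generated by the $T_i$ and by $T_0=T_1^{-1}\cdots T_{n-1}^{-1}S_{n-1}\cdots S_1\omega_1$, which is built from the local operators $T_i^{\pm1}$, $S_i$, $\omega_1$, it is enough that $\Psi(U_q(\mathfrak g))$ commutes with each of $T_i^{\pm1}$, $S_i$, $\omega_1$. For $T_i^{\pm1}$ and $S_i$ this reduces, by the usual propagation argument, to the claim that $T$ and $S$ on $V^{\otimes 2}$ are morphisms of $U_q(\mathfrak g)$-modules: $T$ is a rescaling of Jimbo's $R$-matrix on each $V^{(i)}\otimes V^{(i)}$ and the signed flip across distinct colours, and on $V^{(a)}\otimes V^{(b)}$ with $a\neq b$ the signed flip is a $U_q(\mathfrak g)$-isomorphism because modules supported on different tensor factors of the Hopf algebra $U_q(\mathfrak g)$ braid by the flip; for $\omega_1$ it is immediate, as $\omega$ is a scalar on each colour space $V^{(i)}$ and $U_q(\mathfrak g)$ preserves colour spaces. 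This gives $\Psi(U_q(\mathfrak g))\subseteq\mathrm{End}_{H_n(\boldsymbol{u},q)}(V^{\otimes n})$ and $\Phi(H_n(\boldsymbol{u},q))\subseteq\mathrm{End}_{U_q(\mathfrak g)}(V^{\otimes n})$.

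Next I would prove the single equality $\Phi(H_n(\boldsymbol{u},q))=\mathrm{End}_{U_q(\mathfrak g)}(V^{\otimes n})$. Both algebras preserve the colour content of a basis vector (clear for $T_i,S_i,\omega_i$, and true for $T_0$ since each factor in its defining product either fixes colours or permutes tensor positions), so $V^{\otimes n}=\bigoplus_{\boldsymbol c}V^{\otimes n}_{\boldsymbol c}$ over compositions $\boldsymbol c=(c_1,\dots,c_m)\models n$, where $V^{\otimes n}_{\boldsymbol c}$ is spanned by the basis vectors with $c_i$ legs of colour $i$ and is a submodule for both algebras. As a $U_q(\mathfrak g)$-module $V^{\otimes n}_{\boldsymbol c}\cong W_{\boldsymbol c}^{\oplus\binom{n}{c_1,\dots,c_m}}$ with $W_{\boldsymbol c}=(V^{(1)})^{\otimes c_1}\boxtimes\cdots\boxtimes(V^{(m)})^{\otimes c_m}$, so $\mathrm{End}_{U_q(\mathfrak g)}(V^{\otimes n}_{\boldsymbol c})$ is a matrix algebra over $\bigotimes_{i=1}^m\mathrm{End}_{U_q(\mathfrak{gl}(k_i|\ell_i))}\!\big((V^{(i)})^{\otimes c_i}\big)$, and by the type $A$ quantum Schur--Sergeev duality of Moon \cite{Moon} and Mitsuhashi \cite{Mit} the $i$-th factor here is the image of the Iwahori--Hecke algebra $H_{c_i}(q)$. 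The content of the general-$m$ statement is then that $\Phi(H_n(\boldsymbol{u},q))$ fills up this direct sum of matrix algebras: operators $T_j$ supported on a single colour of a fixed sorted ordering reproduce the type $A$ images, while $g_0$ — equivalently its Jucys--Murphy conjugates $g_{j-1}\cdots g_1g_0g_1\cdots g_{j-1}$, whose eigenvalues detect the colour of the $j$-th leg — both separates the colour-content summands and, together with the colour-crossing $T_j$, produces the matrix units between the $\binom{n}{c_1,\dots,c_m}$ orderings inside each summand; one may organize this as a generation argument or, equivalently, as the dimension count of $\mathrm{End}_{U_q(\mathfrak g)}(V^{\otimes n})$ block by block against a matching lower bound for $\dim_{\mathbb K}\Phi(H_n(\boldsymbol{u},q))$. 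Finally, since the generic Ariki--Koike algebra is split semisimple over $\mathbb K=\mathbb C(\boldsymbol{u},q)$ \cite{AK} and $V^{\otimes n}$ is semisimple over $U_q(\mathfrak g)$ (a tensor product of semisimple modules for the factors, the type $A$ case being part of \cite{Moon,Mit}), the double centralizer theorem upgrades this equality to $\Psi(U_q(\mathfrak g))=\mathrm{End}_{H_n(\boldsymbol{u},q)}(V^{\otimes n})$, completing the proof.

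The step I expect to be the main obstacle is the analysis of $g_0$ in the second part: one must verify that the eigenspace decompositions of $T_0$ and of its conjugates genuinely produce the colour-content idempotents and the cross-colour matrix units, which forces one to unwind the opaque formula for $T_0$ and to exploit the mixed braid relation $g_0g_1g_0g_1=g_1g_0g_1g_0$. A useful guide — and a possible shortcut via semicontinuity of ranks in the flat family over $\mathbb C[\boldsymbol{u}^{\pm1},q^{\pm1}]$ — is the specialization $q\mapsto1$, $\boldsymbol{u}\mapsto(1,\varsigma,\dots,\varsigma^{m-1})$: there $T_0$ collapses to $\omega_1$, so $g_0$ acts by honest colour characters, $\Phi$ becomes a representation of $\mathbb C W_{m,n}$, and the statement becomes the classical wreath-product analogue of Schur--Weyl--Sergeev duality between $\prod_iGL(V^{(i)})$ and $W_{m,n}$ on $V^{\otimes n}$, which one proves by the same colour-block reduction to the classical ($q=1$) super Schur--Weyl duality of Sergeev and Berele--Regev.
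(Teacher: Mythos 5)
First, a point of comparison: the paper does not prove this statement at all --- Theorem~\ref{Them:Schur-Sergeev} is imported verbatim as \cite[Theorem~4.9]{Z24}, so there is no in-paper argument to measure you against. Your overall architecture (check the two actions commute; decompose $V^{\otimes n}$ into colour-content blocks; reduce each block to the type $A$ quantum super duality of Moon and Mitsuhashi; then upgrade one equality to the other by the double centralizer theorem using generic semisimplicity of $H_n(\boldsymbol{u},q)$ and complete reducibility of $V^{\otimes n}$ over $U_q(\mathfrak{g})$) is the standard route for such cyclotomic dualities and is plausibly parallel to what \cite{Z24} does.

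However, as a proof your proposal has a genuine gap at exactly the step you flag as ``the main obstacle'': the surjectivity $\Phi_{\bk|\bl;n}^{\boldsymbol{u},q}(H_n(\boldsymbol{u},q))\supseteq\mathrm{End}_{U_q(\mathfrak{g})}(V^{\otimes n})$ is only asserted, not established. You need to actually produce, inside the image of the Ariki--Koike algebra, (a) the idempotents projecting onto the colour-sequence summands, (b) the cross-ordering ``matrix units'' between the $\binom{n}{c_1,\dots,c_m}$ summands of a fixed content, and (c) the full Hecke image $H_{c_1}(q)\otimes\cdots\otimes H_{c_m}(q)$ inside each block; alternatively you must supply the matching lower bound for $\dim_{\mathbb{K}}\Phi_{\bk|\bl;n}^{\boldsymbol{u},q}(H_n(\boldsymbol{u},q))$ that your dimension-count variant requires. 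None of this is done, and it is the actual content of the theorem --- everything else (commutation, the block decomposition, the double centralizer upgrade) is routine. Note also that working with $T_0=T_1^{-1}\cdots T_{n-1}^{-1}S_{n-1}\cdots S_1\omega_1$ and its conjugates directly is exactly what this paper avoids: the action of $g_0$ is complicated, which is why the author passes to Shoji's presentation, where $\xi_j\mapsto\omega_j$ is diagonal with eigenvalue $u_c$ on legs of colour $c$. Using that presentation would make step (a) immediate (generic distinctness of $u_1,\dots,u_m$ gives the colour idempotents as polynomials in the $\omega_j$), and step (b) can then be extracted from $T_j$ on mixed-colour pairs, since on such pairs $T_j$ equals $(1-q)\cdot\mathrm{pr}+(\text{signed flip})$ and the projection $\mathrm{pr}$ is available from the idempotents already constructed; spelling this out (or citing a Sakamoto--Shoji-type argument adapted to the super setting) is what is missing before the double centralizer step can legitimately be invoked.
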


Recall that the complex reflection $W_{m,n}$ of type $G(m,1,n)$ is generated by $s_0, s_1, \ldots, s_{n-1}$  with relations\begin{align*}&s_0^m=1,\quad s_i^2=1&&\text{ for all }1\leq i< n,\\&s_0s_1s_0s_1=s_1s_0s_1s_0,&&\\&s_is_j=s_js_i, &&\text{ for }|i-j|\geq 2,\\ &s_is_{i+1}s_i=s_{i+1}s_{i}s_{i+1}, &&\text{ for }1\leq i< n-1.\end{align*}

 \begin{remark}\label{Rem:Schur-Sergeev-group}Let $\varsigma$ be a primitive $m$-th root of unity and $\boldsymbol{\varsigma}=(1,\varsigma,\ldots,\varsigma^{m-1})$. Then $H_{n}(\boldsymbol{\varsigma},1) \cong\mathbb{C}W_{m,n}$ and $V^{\otimes n}$ is a $\mathbb{C}W_{m,n}$-module:
 \begin{eqnarray*}&&s_0(\boldsymbol{i})=\varsigma^{c_1(\boldsymbol{i})}\boldsymbol{i},\\
&&s_a(\boldsymbol{i})=(-1)^{\overline{i_a}\,\overline{i_{a+1}}}(i_1, \ldots, i_{a-1}, i_{a+1}, i_a, i_{a+2}, \ldots, i_n), a=1,\ldots,n-1.
\end{eqnarray*}
Furthermore, let $U(\mathfrak{gl}(k_i|\ell_i))$ be the enveloping algebra of $\mathfrak{gl}(k_i|\ell_i)$, then there is a Schur--Sergeev duality between $\mathbb{C}W_{m,n}$ and $U(\mathfrak{gl}(k_1|\ell_1))\otimes\cdots\otimes U(\mathfrak{gl}(k_m|\ell_m))$.
\end{remark}

\section{Standard elements}\label{Sec:Standard-elements}
In this section, we recall the standard elements of the generic Ariki--Koike algebra  introduced by Shoji in \cite[\S\S6.1 and 6.7]{S}.

Let $\Delta=\mathrm{det}V(\boldsymbol{u})$ be the determinant of the Vandermonde matrix $V(\boldsymbol{u})=(u_b^a)_{0\leq a<m,1\leq b\leq m}$. Write $V(\boldsymbol{u})^{-1}=\Delta^{-1}V^*(\boldsymbol{u})$ and define a polynomial $
  F_c(x)=\sum_{0\leq i<m}v_{ci}(\boldsymbol{u})x^i$
 for $1\leq c\leq m$.

Now let $H^{s}_n(\boldsymbol{u},q)$ be the associative $\mathbb{K}$-algebra generated by $g_1, \ldots, g_{n-1}$ and $\xi_1, \ldots, \xi_n$ with relations:
\begin{align*}& g_i^2=(1-q)g_i+q,  && 2\leq i\leq n,\\
&(\xi_i-u_1)\cdots(\xi_i-u_m)=0, && 1\leq i\leq n, \\
&g_ig_{i+1}g_{i}=g_{i+1}g_ig_{i+1},&&  1\leq i< n,\\
&g_ig_j=g_jg_i,\qquad  g_j\xi_i=\xi_ig_j, && |i-j|\geq 2, \\
&\xi_i\xi_j=\xi_j\xi_i,&& 1\leq i,j\leq n, \\
&g_j\xi_{j}=\xi_{j-1}g_j+\Delta^{-2}\sum_{a<b}(u_{a}-u_{b})(1-q)
F_{a}(\xi_{j-1})F_{b}(\xi_j),&&2\leq j\leq n, \\
 &g_j\xi_{j-1}=\xi_{j}g_j-\Delta^{-2}\sum_{a<b}(u_{a}-u_{b})
 (1-q)F_{a}(\xi_{j-1})F_{b}(\xi_j),&&2\leq j\leq n.
            \end{align*}
Thanks to \cite[Theorem~3.7]{S}, $H^{s}_n(\boldsymbol{u},q)\cong H_n(\boldsymbol{u},q)$. Hence the  map
  \begin{align*}\widetilde{\Phi}: H^{s}_n(\boldsymbol{u},q)\rightarrow \mathrm{End}_{\mathbb{K}}(V^{\otimes n}),\quad g_i\mapsto T_i,\quad \xi_j\mapsto \omega_j  \end{align*} realizes the  permutation super representation of $H_n(\boldsymbol{u},q)$.

For $a=1,\ldots,n$, let
\begin{align*}
  t_a=s_{a-1}\cdots s_1s_0s_1\cdots s_{a-1}.
\end{align*}Then $\langle t_1,\ldots, t_n\rangle\cong(\mathbb{Z}/m\mathbb{Z})^{n}$, so any element $w\in W_{m,n}\cong(\mathbb{Z}/m\mathbb{Z})^{n}\rtimes \mathfrak{S}_{n}$ decompose uniquely as   $w=t_1^{c_1}\cdots t_n^{c_n}\sigma$ with $\sigma\in \mathfrak{S}_{n}$, $0\leq c_i<m$.

For a composition $\boldsymbol{c}=(c_1, \ldots, c_b)$ of $ n$, let $\mathfrak{S}_{\boldsymbol{c}}\cong\mathfrak{S}_{c_1}\times\cdots
\times\mathfrak{S}_{c_b}$ be the Young subgroup of $\mathfrak{S}_n$. The parabolic subgroup $W_{\boldsymbol{c}}=(\mathbb{Z}/m\mathbb{Z})^{n}\rtimes \mathfrak{S}_{\boldsymbol{c}}$ decomposes as $W_{\boldsymbol{c}}=W^{(1)}\times W^{(2)}\times\cdots\times W^{(b)}$ with  $W^{(i)}\simeq W_{m,c_i}$.

For $a\geq 1$, let $w(i,a)=t_a^{i-1}s_{a-1}\cdots s_1$. Further, for a partition $\mu=(\mu_1,\mu_2,\cdots)$  of $n$, we define
\begin{eqnarray*}
  w(i,\mu)&=&w(i,\mu_1)\times  w(i,\mu_2)\times\cdots.
\end{eqnarray*}
Finally, for an $m$-multipartition $\bmu=(\mu^{(1)}; \ldots; \mu^{(m)})$ of $n$,  let
\begin{eqnarray}\label{Equ:w-mu}
  &&w_{\bmu}=w(1,\mu^{(1)})\cdots w(m,\mu^{(m)}).
\end{eqnarray}
By \cite[4.2.8]{JK}, $\{w_{\bmu}|\bmu\in \mathscr{P}_{m,n}\}$ forms  conjugate classes representatives of $W_{m,n}$.

\vspace{2\jot}
For $\bgl=(\lambda^{(1)};\ldots, \lambda^{(m)})\in \mathscr{P}_{m,n}$, let $p_i=\sum_{a=1}^i|\lambda^{(a)}|$ and let $H_{\bgl^{(i)}}(q)\subset H_n(q)$ be the subalgebra generated by  $g_j$ for $j=p_{i-1}+1, \ldots, p_{i}-1$ with $p_0=0$. Then $H_{\bgl^{(i)}}(q)\cong H_{|\lambda^{(i)}|}(q)$. Define
\begin{equation*}
 H_{\bgl}(q):=H_{\bgl^{(1)}}(q)\otimes\cdots\otimes H_{\bgl^{(m)}}(q)\hookrightarrow H_n(q).
\end{equation*}
Then the subalgebra $H_{\bgl}(\boldsymbol{u},q)\subset H_n(\boldsymbol{u},q)$ generated by $H_{\bgl}(q)$ and $\xi_1, \ldots, \xi_n$ satisfies
\begin{equation}\label{Equ:embedding}
 H_{\bgl}(\boldsymbol{u},q)\cong H_{\lambda^{(1)}}(\boldsymbol{u},q)\otimes\cdots\otimes H_{\lambda^{(m)}}(\boldsymbol{u},q)\hookrightarrow H_n(\boldsymbol{u},q),
\end{equation}
where $H_{\lambda^{(i)}}(\boldsymbol{u},q)\hookrightarrow H_n(u,q)$ is generated by $H_{\lambda^{(i)}}(\boldsymbol{u},q)$ and $\xi_j$ for $j=p_{i-1}+1, \ldots, p_i$.

For $r,a\geq 1$, define
 \begin{equation}\label{Equ:g-r-a}
  g(r,a)=\xi_{a}^{r-1}g_{a-1}\cdots g_1 \text{ with }g(r,1)=\xi_1^{r-1}.
\end{equation}
For a partition $\mu=(\mu_1,\mu_2,\ldots)$ of $n$, let $n_i=\sum_{j=1}^i|\mu_i|$ and define
\begin{equation}\label{Equ:g(r,mu_i)}
  g(r, \mu_i)=\xi^{r-1}_{n_i}g_{n_i-1}\cdots g_{n_i-|\mu_i|} \text{ for }i=1, \ldots, \ell(\mu).
\end{equation}
By using the embedding (\ref{Equ:embedding}),  we set
\begin{equation}\label{Equ:g(r,mu)}
  g(r,\mu)=g(r,\mu_1)\otimes g(r,\mu_2)\otimes\cdots\in H_n(\boldsymbol{u},q).
\end{equation}
For an $m$-multipartition $\bmu=(\mu^{(1)};\ldots; \mu^{(m)})$ of $n$, define the \textit{standard element}:
\begin{eqnarray}\label{Equ:T-bmu}
   &&g_{\bmu}=g(1,\mu^{(1)})\cdots g(m,\mu^{(m)})\in H_n(\boldsymbol{u},q).
\end{eqnarray}
Shoji \cite[Proposition~7.5]{S} showed that the characters of $H_n(\boldsymbol{u},q)$ are uniquely determined  by their values on $g_{\bmu}$ for all $\bmu\in\mathscr{P}_{m,n}$.

 \begin{remark}The definitions of $w_{\bmu}$ and $g_{\bmu}$ are slightly different from the ones in \cite[\S\S6.1 and 6.7]{S}. Indeed $w_{\bmu}$ is the unique special standard elements with the lengths of the blocks with label 0 in descending order introduced in \cite{Mak}, and $g_{\bmu}$ is consistent with the one of $w_{\bmu}$. \end{remark}

\section{Traces of permutation super representations}\label{Sec:Character}
In this section, the character $\chi_{\bk|\bl;n}^{\boldsymbol{u},q}$ of the permutation super representation $(\Phi_{\bk|\bl;n}^{u,q}, V^{\otimes n})$ of $H_n(\boldsymbol{u},q)$ is completely determined by  an explicitly formula for $\chi_{\bk|\bl;n}^{u,q}(g_{\bmu})$ for all standard elements $g_{\bmu}\in H_n(\boldsymbol{u},q)$  indexed by  $\bmu\in\mathscr{P}_{m,n}$.

From now on, we will identify an element $g_w\in H_n(\boldsymbol{u},q)$ with $\Phi_{\bk|\bl;n}^{\boldsymbol{u},q}(g_w)\in \mathrm{End}_{\mathbb{K}}(V^{\otimes n})$ relative to the basis $\mathfrak{B}$, that is, for $w=t_1^{c_1}t_2^{c_2}\cdots t_n^{c_n}s_{n-1}\cdots s_1\in W_{m,n}$, we identify $g_w$ with $\omega^{c_1}_1\cdots \omega_{n}^{c_n}T_{n-1}\cdots T_1$.
Note that the character $\chi_{\Phi_{\bk|\bl;n}^{\boldsymbol{u},q}}$ is determined by  the trace of $g_{w}$ for all ${w\in W_n}$, which reduces to computing  $\mathrm{Trace}(g_{\bmu})$ for all $m$-multipartitions $\bmu$ of $n$.  By  \eqref{Equ:T-bmu}, it  suffices to compute $\mathrm{Trace}(g(r,a),V^{\otimes a})$ for  $1\leq a\leq n$ and $1\leq r\leq m$.

\vspace{2\jot}
For positive integer $a$, we set
\begin{eqnarray*}
         &&\mathscr{C}(a;\bk|\bl)=\left\{(\bga;\bgb)\models a\left|\substack{\vspace{1\jot}\bga=\left(\ga^{(1)},\ldots, \ga^{(m)}\right)
         \text{ with }\ell(\ga^{(i)})\leq k_i \text{ for } i=1,\ldots,m\\ \bgb=\left(\beta^{(1)},\ldots, \beta^{(m)}\right) \text{ with }\ell(\beta^{(i)})\leq \ell_i \text{ for } i=1,\ldots,m}\right.\right\},\\
         &&\mathscr{I}^{+}(a;k|\ell)=\{\bi=(i_1, \ldots, i_a)|1\leq i_1\leq i_2\leq\cdots\leq i_a\leq k+\ell\}.
    \end{eqnarray*}
Recall that $d_i=\sum_{j=1}^i(k_j+\ell_j)$. For $\bi\in\mathscr{I}^{+}(a;k|\ell)$,
   write $\bi$  uniquely as
 \begin{align*}
   \boldsymbol{i}(\bga;\bgb)&=\biggl(1^{\alpha_1^{(1)}},\ldots,d_1^{\beta_{\ell_1}^{(1)}},
   \ldots,   (d_{m-1}+1)^{\alpha_{1}^{(m)}},\ldots,d_m^{\beta_{\ell_m}^{(m)}}\biggr),
    \end{align*}
 for some $(\bga;\bgb)\in\mathscr{C}(a;\bk|\bl)$. Thus $\mathscr{I}^{+}(a;k|\ell)$ is identified with $\mathscr{C}(a;\bk|\bl)$.

\begin{lemma}\label{Lemm:Trace-g(r,a)}For positive integers $r$ and $a$, let $\Theta_{\bk|\bl}^{\boldsymbol{u},q}(r,a)=\mathrm{Trace}(g(r,a),V^{\otimes a})$. Then \begin{equation*}
     \Theta_{\bk|\bl}^{\boldsymbol{u},q}(r,a)=\!\!\!\!
     \sum_{(\bga;\bgb)\in\mathscr{C}(a;\bk|\bl)}
  \!\!\!u^{r-1}_{\tilde{\ell}(\bga;\bgb)}
     (-q)^{|\bga|-\ell(\bga)}(1-q)^{\ell(\bga;\bgb)-1}
     \prod_{i=1}^m\tbinom{k_i}{\ell(\ga^{(i)})}
     \tbinom{\ell_i}{\ell(\gb^{(i)})},
    \end{equation*}
    where $\tilde{\ell}(\bga;\bgb)=\max\{i|\ell(\ga^{(i)};\gb^{(i)})>0\}$.
   \end{lemma}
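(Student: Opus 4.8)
The plan is to compute the trace of the operator $g(r,a)=\xi_a^{r-1}g_{a-1}\cdots g_1$ acting on $V^{\otimes a}$ directly on the homogeneous basis $\mathfrak{B}^{\otimes a}$, by identifying exactly which basis vectors $v_{\bi}$ contribute a nonzero diagonal coefficient. First I would recall that under $\widetilde\Phi$ the element $g(r,a)$ acts as $\omega_a^{r-1}T_{a-1}\cdots T_1$, and that $T_{a-1}\cdots T_1$ is the operator that ``cycles'' the first tensor slot into position $a$; concretely, applying $T_1$ then $T_2$, etc., to $v_{i_1}\otimes\cdots\otimes v_{i_a}$ moves $v_{i_1}$ rightward past $v_{i_2},\ldots,v_{i_a}$. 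The key structural observation is that, because each $T_j$ in the ``large index beats small index'' branch of (\ref{Equ:T}) produces only the transposed term (up to a sign), while the $i_1<i_{j+1}$ and $i_1=i_{j+1}$ branches produce extra diagonal-type pieces, the diagonal coefficient of $g(r,a)$ on $v_{\bi}$ is nonzero only when $\bi$ is, up to the relevant reordering, a weakly increasing sequence — equivalently $\bi\in\mathscr{I}^+(a;k|\ell)$ — and the $\omega_a^{r-1}$ factor then contributes $u_{c_a(\bi)}^{r-1}$, which for such $\bi$ equals $u_{\tilde\ell(\bga;\bgb)}^{r-1}$.

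The main computation is therefore to evaluate, for each $\bi=\bi(\bga;\bgb)\in\mathscr{I}^+(a;k|\ell)$, the coefficient of $v_{\bi}$ in $T_{a-1}\cdots T_1\, v_{\bi}$. I would do this by tracking the single vector $v_{i_1}$ as it is pushed through the chain: at each step it meets a vector $v_{i_{j+1}}$; the three cases of (\ref{Equ:T}) apply according to whether $i_1<i_{j+1}$, $=$, or $>$, and only the first two cases can return a term that still has $v_{i_1}$ in a slot matching the original $\bi$ after all moves are completed. Since $\bi$ is weakly increasing, $i_1$ is minimal, so the relevant encounters are of the ``equal'' type (contributing the scalar $\frac{(1-q)+(-1)^{\overline{i_1}}(1+q)}{2}$, which is $1$ if $v_{i_1}$ is even and $-q$ if odd) and the ``strictly less'' type (contributing the diagonal scalar $1-q$ together with the off-diagonal move). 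A careful bookkeeping of how many equal-encounters and how many strict-less-encounters occur, recorded by the block structure $(\bga;\bgb)$, yields the factors $(-q)^{|\bgb|-\ell(\bgb)}$ (one $-q$ for each odd vector beyond the first in each odd block, i.e. $\sum_i(\beta^{(i)}_j-1)$) and $(1-q)^{\ell(\bga;\bgb)-1}$ (one $1-q$ for each block boundary crossed after the first nonzero block). I would present this as a lemma computing $\langle v_{\bi}\,|\,T_{a-1}\cdots T_1\,v_{\bi}\rangle$ explicitly, then sum the contributions.

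Finally, the binomial factors arise from passing from the ordered basis vectors $v_{\bi(\bga;\bgb)}$ to the combinatorial data $(\bga;\bgb)\in\mathscr{C}(a;\bk|\bl)$: a given composition datum $(\bga;\bgb)$ with $\ell(\alpha^{(i)})$ nonzero even parts and $\ell(\beta^{(i)})$ nonzero odd parts of color $i$ corresponds to $\prod_i\binom{k_i}{\ell(\alpha^{(i)})}\binom{\ell_i}{\ell(\beta^{(i)})}$ choices of which basis vectors of each color to use, and every such choice gives the same diagonal coefficient because the scalars in (\ref{Equ:T}) depend only on the parities and the relative order, not on the specific indices. Summing over all $(\bga;\bgb)\in\mathscr{C}(a;\bk|\bl)$ and multiplying by $u^{r-1}_{\tilde\ell(\bga;\bgb)}$ from $\omega_a^{r-1}$ gives the stated formula.

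The step I expect to be the main obstacle is the precise sign and scalar bookkeeping in the ``push-through'' computation: one must show rigorously that no other basis vector contributes to the diagonal (i.e. that the various off-diagonal terms generated along the way cannot recombine to land back on $v_{\bi}$ unless $\bi$ was weakly increasing to begin with), and then correctly count the exponents of $-q$ and $1-q$ in terms of $|\bgb|-\ell(\bgb)$ and $\ell(\bga;\bgb)-1$ while keeping track of the $(-1)^{\overline{i_a}\,\overline{i_b}}$ sign factors, which should ultimately cancel in pairs on the diagonal. Once that combinatorial lemma is in place, assembling $\Theta^{\boldsymbol{u},q}_{\bk|\bl}(r,a)$ is routine.
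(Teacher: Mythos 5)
Your proposal takes essentially the same route as the paper's own proof: write the trace as the sum of diagonal coefficients of $\omega_a^{r-1}T_{a-1}\cdots T_1$ on the monomial basis $\mathfrak{B}^{\otimes a}$, check via the three branches of (\ref{Equ:T}) that only weakly increasing $\bi=\bi(\bga;\bgb)$ contribute, with scalar $u^{r-1}_{\tilde{\ell}(\bga;\bgb)}(-q)^{|\bgb|-\ell(\bgb)}(1-q)^{\ell(\bga;\bgb)-1}$, and then count the sequences realizing a given $(\bga;\bgb)$ by $\prod_i\binom{k_i}{\ell(\alpha^{(i)})}\binom{\ell_i}{\ell(\beta^{(i)})}$ — the paper does exactly this, only more tersely. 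One small recalibration for the bookkeeping you flag as the main obstacle: in the diagonal terms $v_{i_1}$ is never pushed past a strictly larger entry (at each block boundary the $(1-q)$ ``stay'' branch is taken, and a short induction on the already-finalized slots $1,\dots,j$ shows genuine swaps of distinct basis vectors can never return to $v_{\bi}$), so no $(-1)^{\overline{i_a}\,\overline{i_b}}$ signs ever enter the diagonal and nothing needs to cancel in pairs.
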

\begin{proof}For $\boldsymbol{i}\in \mathfrak{B}^{\otimes a}$, let $g(r,a;\boldsymbol{i})$ denote the coefficient of $\bi$ in  $g(r,a)(\boldsymbol{i})$.  Then
    \begin{align*}
         \Theta_{\bk|\bl}^{\boldsymbol{u},q}(r,a)&=
     \sum_{\bi\in\mathfrak{B}^{\otimes n}}g(r,a;\bi).
    \end{align*}
 By (\ref{Equ:T}) and (\ref{Equ:g-r-a}), we get
    \begin{eqnarray}\label{Equ:Ram}
 && g(r,a;\bi)=\left\{\!\!
  \begin{array}{ll}\vspace{1\jot}
  u^{r-1}_{\tilde{\ell}(\bga;\bgb)}(\!-q\!)^{|\bga|-\ell(\bga)}(\!1\!-\!q\!)^{\ell(\bga;\bgb)-1},& \text{if }\bi=\bi(\bga;\bgb)\in\mathscr{C}(a;\bk|\bl);\\
  0,&\text{otherwise.}\end{array}\right.
    \end{eqnarray}
 Given $(\bga;\bgb)\in \mathscr{C}(a;\bk|\bl)$, there are  $\prod_{i=1}^m\tbinom{k_i}{\ell(\!\ga^{(\!i\!)}\!)}\tbinom{\ell_i}{\ell(\!\gb^{(\!i\!)}\!)}$ basis elements  $\bi(\bga;\bgb)$ in $V^{\otimes a}$. Thus
\begin{align*}
        \Theta_{\bk|\bl}^{\boldsymbol{u},q}(r,a)&=\sum_{(\bga;\bgb)\in\mathscr{C}(a;\bk|\bl)}\!\!\!
     u^{r-1}_{\tilde{\ell}(\bga;\bgb)}
     (\!-q\!)^{|\bga|-\ell(\bga)}(\!1\!-\!q\!)^{\ell(\bga;\bgb)-1}
     \prod_{i=1}^m\tbinom{k_i}{\ell(\!\ga^{(\!i\!)}\!)}
     \tbinom{\ell_i}{\ell(\!\gb^{(\!i\!)}\!)}.
        \end{align*}
This completes the proof.
\end{proof}

 \begin{proof}[Proof of Theorem~\ref{Them:Char-super-reps}]By (\ref{Equ:g(r,mu_i)}--\ref{Equ:T-bmu}), we have
 \begin{align*}
  \chi_{\bk|\bl;n}^{\boldsymbol{u},q}(g_{\bmu})&
  \prod_{r=1}^{m}\mathrm{Trace}(g(r,\mu^{(r)}),V^{\otimes |\mu^{(r)}|})=\prod_{r=1}^{m}\prod_{j=1}^{\ell(\mu^{(r)})} \Theta_{\bk|\bl}^{\boldsymbol{u},q}(r,\mu_j^{(r)}).
    \end{align*}
Thus Lemma~\ref{Lemm:Trace-g(r,a)} shows
 \begin{align*}
  \chi_{\bk|\bl;n}^{\boldsymbol{u},q}(g_{\bmu})&\prod_{r=1}^m\!\!\prod_{j=1}^{\ell(\!\mu^{(\!r\!)}\!)}
 \sum_{\substack{(\bga;\bgb)
    \in\mathscr{C}(\mu_j^{(\!r\!)};\bk|\bl)}}
   \!\!\!\!
   u^{r\!-\!1}_{\tilde{\ell}(\bga;\bgb)}(\!-q\!)^{|\bga|-\ell(\!\bga\!)}
   (\!1\!-\!q\!)^{\ell(\!\bga;\bgb\!)-1}\!\!\prod_{i=1}^m\!\!\tbinom{k_i}{\ell(\!\ga^{(\!i\!)}\!)}
     \tbinom{\ell_i}{\ell(\!\gb^{(\!i\!)}\!)}. \end{align*}
     The proof is completed.
 \end{proof}
\begin{remark}If $m=1$ and let $u_1=1$, then $(\Phi^{1,q}_{k|\ell}, V^{\otimes n})$ is the sign $q$-permutation representation of $H_n(q)$ and we reprove \cite[Theorem~A]{Z19}. Note: there are minor misprints in the cited work. When $m=1$, $u_1=1$, and $\ell=0$, we obtain a negated version of \cite[Theorem~4.1 and Proposition~4.2]{Ram}): for any partition $\mu$ of $n$,
\begin{eqnarray*}
    \chi_{k|0;n}^{1,q}(\mu)&=&\prod_{i=1}^{\ell(\mu)}
    \sum_{(\ga;\emptyset)\in\mathscr{C}(\mu_i;k|0)}\tbinom{k}{\ell(\ga)}
    (-q)^{\mu_i-\ell(\ga)}(1-q)^{\ell(\ga)-1}.
   \end{eqnarray*}
\end{remark}
\begin{remark}For $\boldsymbol{\ell}=\boldsymbol{0}$, we recover the negated version of \cite[Theorem~6.8]{S}: for any $m$-multipartition $\bmu$ of $n$,
  \begin{eqnarray*}
    \chi_{\bk|\boldsymbol{0};n}^{m,q}(\bmu)&=&\prod_{r=1}^m\!\!
    \prod_{j=1}^{\ell(\mu^{(r)})}\!\!\!\sum_{\substack{(\bga;\boldsymbol{\emptyset})
    \in\mathscr{C}(\mu_j^{(\!r\!)};\bk|\boldsymbol{0})}}
   \!\!\!\!\!\!\!\negmedspace
   u^{r\!-\!1}_{\tilde{\ell}(\bga)}(-q)^{|\bga|-\ell(\!\bga\!)}
   (1-q)^{\ell(\!\bga\!)-1}\!\!\prod_{i=1}^m\!\!\tbinom{k_i}{\ell(\!\ga^{(\!i\!)}\!)}.
   \end{eqnarray*}
\end{remark}

The following fact clarifies the relation between (\ref{Equ:Ram}) and \cite[Lemma~4.12]{Ram}.
For any partition $\lambda$, let $s_{\lambda}(t)$ be the Schur function in variable $t$. The \textit{duality formula} for Schur function (see \cite[Page~43]{Macdonald}) states that
\begin{equation*}
  s_{\lambda}(-t)=(-1)^{|\lambda|}s_{\lambda'}(t),
\end{equation*}
where $\lambda'$ denotes the conjugate partition to $\lambda$. By applying \cite[Lemma~4.12]{Ram}, we have
\begin{equation*}
  s_{\lambda}(q)=\left\{\begin{array}{ll}\vspace{1\jot}
  q(q-1)^{|\lambda|-b},& \text{if $\lambda=(1^{|\lambda|-b}b)$ for some $b\geq 1$};\\
   0,&\text{otherwise}.
  \end{array}\right.
\end{equation*}

\begin{corollary}Keeping notations as above. Then
  \begin{eqnarray*}
  && g(r,a;\bi)=\left\{\!\!
  \begin{array}{ll}\vspace{1\jot}
  -u^{r-1}_{\tilde{\ell}(\bga;\bgb)}s_{(1^{|\bga|-\ell(\bga)-1}\ell(\bga)+1)}(\!1\!-\!q\!)
  s_{(1\,\ell(\bga;\bgb)-1)}(\!-q\!),& \text{if }\bi=\bi(\bga;\bgb)\in\mathscr{C}(a;\bk|\bl);\\
  0,&\text{otherwise.}\end{array}\right.
  \end{eqnarray*}
\end{corollary}
\begin{proof}By applying \cite[Lemma~4.12]{Ram},
 \begin{eqnarray*} g(r,a;\bi)&=&
  u^{r-1}_{\tilde{\ell}(\bga;\bgb)}(-q)(1-q)^{\ell(\bga;\bgb)-2}
  s_{(1^{|\bga|\!-\!\ell(\bga)\!-\!1}\ell(\bga)+1)}(\!1\!-\!q\!)\\
  &=&(-1)^{\ell(\bga;\bgb)-1}u^{r-1}_{\tilde{\ell}(\bga;\bgb)}s_{(1^{\ell(\bga;\bgb)-2}2)}(q)
  s_{(1^{|\bga|\!-\!\ell(\bga)\!-\!1}\ell(\bga)+1)}(\!1\!-\!q\!)\\
  &=&-u^{r-1}_{\tilde{\ell}(\bga;\bgb)}s_{(1\,\ell(\bga;\bgb)-1)}(-q)
  s_{(1^{|\bga|\!-\!\ell(\bga)\!-\!1}\ell(\bga)+1)}(\!1\!-\!q\!),
  \end{eqnarray*}
  where the last equality follows by applying the duality formula for Schur functions
\end{proof}

\section{$(\bk,\bl)$-hook multipartitions}\label{Sec:Hook-partitions}
In this section,  combining the Schur--Sergeev duality between the quantum superalgebra and Ariki--Koike algebra \cite{Z24} and the representation theory of quantum superalgebra
$U_q(\mathfrak{g})$, we reinterpret the trace of permutation super representation as a sum of irreducible characters indexed by $(\bk,\bl)$-hook multipartitions of $n$.

A partition $\gl=(\gl_1, \gl_2, \cdots)\vdash n$ is said to be a \textit{$(k, \ell)$-hook partition} of $n$ if $\gl_{k+1}\leq \ell$. We let  $H(k|\ell;n)$ denote the set of all such partitions:
\begin{eqnarray}\label{Def:hook}
H(k|\ell;n)=\{\gl=(\gl_1,\gl_2,\cdots)\vdash n\mid \gl_{k+1}\le \ell\}.
\end{eqnarray}
An $m$-multipartition $\bgl$ of $n$ is a \textit{$(\bk,\bl)$-hook $m$-multipartition} if $\lambda^{(i)}$ is a $(k_i,\ell_i)$-hook partition for $i=1, \ldots, m$. Let $H(\bk|\bl;n)$ denote the set of all $(\bk,\bl)$-hook $m$-multipartitions of $n$. By \cite{BKK},   the irreducible $U_q(\mathfrak{gl}(k|\ell))$-modules in $V^{\otimes n}$ are parameterized by  $(k,\ell)$-hook partitions of $n$. As a consequence,  the irreducible $U_q(\mathfrak{g})$-modules in $V^{\otimes n}$ are parameterized by $H(\bk|\bl;n)$.

 Define commuting variables sets  $\boldsymbol{x}, \boldsymbol{y}$ of $k+\ell$  as:
\begin{eqnarray*}\label{Equ:Variables}&&\begin{array}{c}\vspace{2\jot}
\boldsymbol{x}^{(i)}=\left\{x^{(i)}_1,\ldots,x_{k_i}^{(i)}\right\},\quad 1\leq i\leq m,\\\vspace{2\jot}
\boldsymbol{y}^{(i)}=\left\{y^{(i)}_1,\ldots,y_{\ell_i}^{(i)}\right\},\quad1\leq i\leq m,\\ \boldsymbol{x}=\boldsymbol{x}^{(1)}\cup\cdots\cup\boldsymbol{x}^{(m)},
\quad\boldsymbol{y}=\boldsymbol{y}^{(1)}\cup\cdots\cup\boldsymbol{y}^{(m)}.\end{array}
\end{eqnarray*}
We linearly order variables by the rule\begin{eqnarray*}  x_{a}^{(i)}<x_{b}^{(j)}&\text{ if and only if }& \text{either }i<j\text{ or }i=j\text{ and }a<b,\\ y_{a}^{(i)}<y_{b}^{(j)}&\text{ if and only if }&\text{either } i<j\text{ or }i=j\text{ and }a<b.                                                         \end{eqnarray*}

Recall that the \textit{diagram}  of an $m$-multipartition $\bgl$ of $n$ is
\begin{equation*}
\bgl:=\{(i,j,c)\in\mathbb{Z}_{>0}\times\mathbb{Z}_{>0}\times \{1,\ldots,m\}|1\le j\le\lambda^c_i\}.
\end{equation*}
By a  \textit{$\bgl$-tableau}, we mean a filling of its diagram  with variables  $\bx$, $\by$. We say that a tableau $\boldsymbol{T}=(\boldsymbol{T}^{(1)};\ldots;\boldsymbol{T}^{(m)})$ is \emph{$\bk|\bl$-semistandard} if  for each $i=1,\ldots,m$:
\begin{enumerate}
  \item[(a)]$\boldsymbol{T}^{(i)}$ is a $\lambda^{(i)}$-tableau containing variables  $\bx^{(i)}, \by^{(i)}$ and its $\bx$ part (the boxes filled with variables $\bx$ of $\boldsymbol{T}^{(i)}$) is a tableau and its $\by$ part is a skew tableau;
  \item[(b)] The $\bx$ part is nondecreasing in rows, strictly increasing in columns;
  \item[(c)] The $\by$ part is nondecreasing in columns, strictly increasing in rows.
\end{enumerate}

Let $\mathrm{std}_{\bk|\bl}(\bgl)$ denote the $\bk|\bl$-semistandard tableaux of shape $\bgl$ and $s_{\bk|\bl}(\bgl)$ its cardinality. By \cite[\S\,2]{B-Regev} and \cite[Lemma~4.2]{BKK}, $s_{\bk|\bl}(\bgl)\neq 0$ if and only if $\bgl\in H(\bk|\bl;n)$.

By \cite{BKK}, the irreducible summands $V_{\lambda}$, $\lambda\in H(k|\ell;n)$, in the $U_q(\mathfrak{gl}(k|\ell))$-modules $(\Psi^{\otimes n}, V^{\otimes n})$ have a basis parameterized by the  $(k,\ell)$-semistandard tableaux of shape $\gl$, which means $\dim _{\mathbb{K}}V_{\lambda}=s_{k|\ell}(\gl)$. Thus the irreducible summand $V_{\bgl}$ in the $U_q(\mathfrak{g})$-module $(\Psi_n, V^{\otimes n})$ has a basis parameterized by the $\bk|\bl$-semistandard tableaux of shape $\bgl$ and $\dim_{\mathbb{K}}V_{\bgl}=s_{\bk|\bl}(\bgl)$. Furthermore, by \cite[Theorem~4.9]{Z24}, we have the following $U_q(\mathfrak{g})$-$H_n(\boldsymbol{u},q))$-bimodule isomorphism: \begin{equation*}
  V^{\otimes n} \cong \bigoplus_{\bgl\in H(\bk|\bl;n)}V_{\bgl}\otimes S^{\bgl},
\end{equation*}
where $S^{\bgl}$ is the irreducible representation of $H_n(\boldsymbol{u},q)$ indexed by $\bgl$. Thus
\begin{equation}\label{Equ:Trace-char}
   \chi_{\bk|\bl;n}^{\boldsymbol{u},q}=\sum_{\bgl\in H(\bk|\bl;n)}s_{\bk|\bl}(\bgl)\chi^{\bgl}.
\end{equation}
Combining Theorem~\ref{Them:Char-super-reps} and (\ref{Equ:Trace-char}) yields:
\begin{corollary}\label{Cor:Regev-formula}For $\bmu=(\mu^{(1)};\ldots;\mu^{(m)})\vdash n$,
  \begin{equation*}
  \sum_{\bgl\in H(\bk|\bl;n)}\!\!\!\!\!\!s_{\bk|\bl}(\!\bgl\!)\chi^{\bgl}(\!g_{\bmu}\!)\!=
  \!\prod_{r=1}^m\!\prod_{j=1}^{\ell(\!\mu^{(\!r\!)}\!)}\!\sum_{\substack{(\bga;\bgb)
    \in\mathscr{C}(\mu_j^{(r)};\bk|\bl)}}
  \!\!\!\!\!\!\!\!u^{r\!-\!1}_{\tilde{\ell}(\bga;\bgb)}(\!-q\!)^{|\bga|\!-\!\ell(\bga)}
   (\!1\!-\!q\!)^{\ell(\!\bga;\bgb\!)\!-\!1}\!\prod_{i=1}^m\!
   \tbinom{k_i}{\ell(\!\ga^{(\!i\!)}\!)}\!\tbinom{\ell_i}{\ell(\!\gb^{(\!i\!)}\!)},
\end{equation*}
 where $\tilde{\ell}(\bga;\bgb)=\max\{i|\ell(\ga^{(i)},\gb^{(i)})>0\}$.
\end{corollary}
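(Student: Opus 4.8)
The plan is to read off the identity by evaluating the bimodule decomposition at the standard element $g_{\bmu}$; no new work is required beyond what has already been set up. First I would invoke the $U_q(\fg)$-$H_n(\boldsymbol{u},q)$-bimodule isomorphism $V^{\otimes n}\cong\bigoplus_{\bgl\in H(\bk|\bl;n)}V_{\bgl}\otimes S^{\bgl}$ from \cite[Theorem~4.9]{Z24}, together with the dimension count $\dim_{\mathbb{K}}V_{\bgl}=s_{\bk|\bl}(\bgl)$ coming from the Benkart--Kang--Kashiwara parametrization of the irreducible $U_q(\mathfrak{gl}(k_i|\ell_i))$-summands by $(k_i,\ell_i)$-semistandard tableaux (applied componentwise, using \cite[\S\,2]{B-Regev} and \cite[Lemma~4.2]{BKK}). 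Taking graded characters of both sides yields the class-function identity $\chi_{\bk|\bl;n}^{\boldsymbol{u},q}=\sum_{\bgl\in H(\bk|\bl;n)}s_{\bk|\bl}(\bgl)\chi^{\bgl}$ on $H_n(\boldsymbol{u},q)$, which is exactly Eq.~(\ref{Equ:Trace-char}).

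Next, since (\ref{Equ:Trace-char}) is an identity of traces on the algebra, I would specialize it to the element $g_{\bmu}\in H_n(\boldsymbol{u},q)$ attached to an arbitrary $m$-multipartition $\bmu=(\mu^{(1)};\ldots;\mu^{(m)})$ of $n$, obtaining
\[
\sum_{\bgl\in H(\bk|\bl;n)}s_{\bk|\bl}(\bgl)\chi^{\bgl}(g_{\bmu})=\chi_{\bk|\bl;n}^{\boldsymbol{u},q}(g_{\bmu}).
\]
Finally I would substitute the explicit value of the right-hand side furnished by Theorem~\ref{Them:Char-super-reps}, namely the product over $r=1,\dots,m$ and $j=1,\dots,\ell(\mu^{(r)})$ of the sums over $(\bga;\bgb)\in\mathscr{C}(\mu_j^{(r)};\bk|\bl)$ of $u^{r-1}_{\tilde{\ell}(\bga;\bgb)}(-q)^{|\bgb|-\ell(\bgb)}(1-q)^{\ell(\bga;\bgb)-1}\prod_{i=1}^m\binom{k_i}{\ell(\ga_i)}\binom{\ell_i}{\ell(\gb_i)}$. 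This is precisely the asserted formula.

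There is essentially no obstacle here: the genuine content lies entirely in the Schur--Sergeev duality of \cite{Z24} (which underlies the bimodule decomposition, hence (\ref{Equ:Trace-char})) and in Theorem~\ref{Them:Char-super-reps} itself (proved via Lemma~\ref{Lemm:Trace-g(r,a)}). The only modest points to verify are bookkeeping: that the indexing set in the decomposition is exactly $H(\bk|\bl;n)=\{\bgl\mid s_{\bk|\bl}(\bgl)\neq 0\}$, and that the conventions $\tilde{\ell}(\bga;\bgb)=\max\{i\mid \ell(\ga^{(i)};\gb^{(i)})>0\}$ and $\ell(\bga;\bgb)=\sum_i(\ell(\ga^{(i)})+\ell(\gb^{(i)}))$ agree with those used in Theorem~\ref{Them:Char-super-reps}, so that the two right-hand sides are literally the same expression.
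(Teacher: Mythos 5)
Your proposal is correct and follows exactly the paper's route: combine the bimodule decomposition $V^{\otimes n}\cong\bigoplus_{\bgl\in H(\bk|\bl;n)}V_{\bgl}\otimes S^{\bgl}$ with $\dim_{\mathbb{K}}V_{\bgl}=s_{\bk|\bl}(\bgl)$ to get the character identity (\ref{Equ:Trace-char}), then evaluate at $g_{\bmu}$ and substitute Theorem~\ref{Them:Char-super-reps}. This is precisely how the paper derives Corollary~\ref{Cor:Regev-formula}, so no further comment is needed.
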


\section{Specializations}\label{Sec:Specializations}
This section investigates various specializations of Theorem~\ref{Them:Char-super-reps}. Specifically,  we first derive a formula for characters of complex reflection group $W_{m,n}$ by setting $\boldsymbol{u}=\boldsymbol{\varsigma}$ and $q=1$. Then we prove an  Ariki--Koike analogue of \cite[Theorem~B]{Z19} for $\bk=\bl=\boldsymbol{1}_m$, and prove  type $B$-analogues of \cite[Proposition~1.1]{Regev-2013} and \cite[Theorem~B]{Z19}.

 \begin{proof}[Proof of Corollary~\ref{Cor:W-m-n}]For $r=1,\ldots,m$ and $j=1, \ldots, \ell(\mu^{r})$, let  $\mathscr{C}(\mu_j^{(r)};\bk)$ (resp. $\mathscr{C}(\mu_j^{(r)};\bl)$) denote the set of $m$-multicompositions $\bga=(\alpha^{(1)}; \ldots, \alpha^{(m)})$ of $\mu_j^{(r)}$ with $\ell(\alpha^{(i)})\leq k_i$ (resp. $\ell(\alpha^{(i)})\leq \ell_i$). By applying Lemma~\ref{Lemm:Trace-g(r,a)}, we yield
  \begin{align*}
 \Theta_{\bk|\bl}^{\boldsymbol{\varsigma},1}(r,\mu_j^{(r)})&=\sum_{\substack{(\bga;\bgb)
    \in\mathscr{C}(\mu_j^{(r)};\bk|\bl)\\\ell(\bga;\bgb)=1}}
   \!\!\varsigma^{(r-1)(\tilde{\ell}(\bga;\bgb)-1)}(-1)^{|\bga|-\ell(\bga)}
   \prod_{i=1}^m\tbinom{k_i}{\ell(\ga_i)}
     \tbinom{\ell_i}{\ell(\gb_i)}.\end{align*}
Splitting the right hand side into $\bga$ and $\bgb$ components, we have
\begin{align*}\mathrm{RHS}&=\sum_{\substack{\bga
    \in\mathscr{C}(\mu_j^{(r)};\bk)\\\ell(\bga)\!=\!1}}
   \!\!\varsigma^{(r-1)(\tilde{\ell}(\bga)-1)}   (-\!1)^{|\bga|\!-\!\ell(\bga)}\prod_{i=1}^m\tbinom{k_i}{\ell(\ga_i)}\!+\!\sum_{\substack{\bgb
    \in\mathscr{C}(\mu_j^{(r)};\bl)\\\ell(\bgb)=1}}
   \!\!\varsigma^{(r\!-\!1)(\tilde{\ell}(\bgb)\!-\!1)}
   \prod_{i=1}^m\tbinom{\ell_i}{\ell(\gb_i)}\\
   &=\sum_{i=1}^m(\ell_i-(-1)^{|\mu_j^{(r)}|}k_i)\varsigma^{(r-1)(i-1)}.
   \end{align*}
Thus the corollary follows  by applying Theorem~\ref{Them:Char-super-reps}.
 \end{proof}

\begin{proof}[Proof of Corollary~\ref{Cor:Block}]As $\chi_{\boldsymbol{0}|\boldsymbol{k};n}^{\boldsymbol{\varsigma},1}$ and $\chi_{\boldsymbol{k}|\boldsymbol{0};n}^{\boldsymbol{\varsigma},1}$
   are class functions on $W_{r,n}$, we may assume $w$  is $w_{\bmu}$ for some $m$-multipartition $\bmu$ of $n$. Then Corollary~\ref{Cor:W-m-n} shows
  \begin{eqnarray*}\vspace{2\jot}
  \Theta_{\boldsymbol{0}|\bk}^{\boldsymbol{\varsigma},1}(r,\mu_j^{(r)})
  &=&\left\{\begin{array}{ll}
  mk+1,&\text {if $r=1$},\\1,&\text{otherwise};
     \end{array}\right.\\
 \Theta_{\bk|\boldsymbol{0}}^{\boldsymbol{\varsigma},1}(r,\mu_j^{(r)})
  &=&\left\{\begin{array}{ll}
  -(-1)^{|\mu_j^{(1)}|}mk+1,&\text {if $r=1$},\\(-1)^{|\mu_j^{(r)}|+1},&\text{otherwise}.
     \end{array}\right.
  \end{eqnarray*}
The corollary follows directly.
\end{proof}
 Combining Remark~\ref{Rem:Schur-Sergeev-group}, Corollaries~\ref{Cor:Regev-formula}, and \ref{Cor:W-m-n}, we obtain
\begin{corollary}\label{Cor:W-R}For $\bmu=(\mu^{(1)};\ldots;\mu^{(m)})\vdash n$,
  \begin{equation*}
  \sum_{\bgl\in H(\bk|\bl;n)}s_{\bk|\bl}(\bgl)\chi^{\bgl}(w_{\bmu})=
  \!\prod_{r=1}^m\prod_{j=1}^{\ell(\mu^{(r)})}
  \sum_{i=1}^m\left(\ell_i-(-1)^{|\mu_j^{(r)}|}k_i\right)\varsigma^{(r-1)(i-1)}.
\end{equation*}
\end{corollary}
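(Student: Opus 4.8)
The plan is to obtain the identity by specialising the generic decomposition formula for the permutation super character and then quoting the explicit evaluation of $\chi_{\bk|\bl;n}^{\boldsymbol{\varsigma},1}(w_{\bmu})$ already recorded in Corollary~\ref{Cor:W-m-n}. First I would recall that the $U_q(\mathfrak{g})$--$H_n(\boldsymbol{u},q)$-bimodule decomposition $V^{\otimes n}\cong\bigoplus_{\bgl\in H(\bk|\bl;n)}V_{\bgl}\otimes S^{\bgl}$ of Theorem~\ref{Them:Schur-Sergeev}, together with $\dim_{\mathbb{K}}V_{\bgl}=s_{\bk|\bl}(\bgl)$, yields the character identity (\ref{Equ:Trace-char}), i.e.\ $\chi_{\bk|\bl;n}^{\boldsymbol{u},q}=\sum_{\bgl\in H(\bk|\bl;n)}s_{\bk|\bl}(\bgl)\chi^{\bgl}$ as characters of $H_n(\boldsymbol{u},q)$; evaluating at the standard element $g_{\bmu}$ gives exactly Corollary~\ref{Cor:Regev-formula}.

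Next I would pass to the group $W_{m,n}$ via Remark~\ref{Rem:Schur-Sergeev-group}. The tensor module $V^{\otimes n}$ is defined over the integral form of $H_n(\boldsymbol{u},q)$, and $(\boldsymbol{u},q)=(\boldsymbol{\varsigma},1)$ is a split semisimple specialisation at which $H_n(\boldsymbol{\varsigma},1)\cong\mathbb{C}W_{m,n}$; under this specialisation the representation $\Phi_{\bk|\bl;n}^{\boldsymbol{\varsigma},1}$ becomes the $\mathbb{C}W_{m,n}$-module $V^{\otimes n}$ described in the Remark, the generic irreducible $S^{\bgl}$ becomes the irreducible $W_{m,n}$-module labelled by $\bgl\in\mathscr{P}_{m,n}$, the integer $s_{\bk|\bl}(\bgl)$ — being a purely combinatorial count of $\bk|\bl$-semistandard tableaux — is unchanged, and the standard element $g_{\bmu}$ is carried to $w_{\bmu}$ (the ``consistency'' noted in the Remark following (\ref{Equ:T-bmu})). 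Specialising (\ref{Equ:Trace-char}) and evaluating at $w_{\bmu}$ therefore gives $\sum_{\bgl\in H(\bk|\bl;n)}s_{\bk|\bl}(\bgl)\chi^{\bgl}(w_{\bmu})=\chi_{\bk|\bl;n}^{\boldsymbol{\varsigma},1}(w_{\bmu})$, and substituting the value $\prod_{r=1}^m\prod_{j=1}^{\ell(\mu^{(r)})}\sum_{i=1}^m(k_i-(-1)^{|\mu_j^{(r)}|}\ell_i)\varsigma^{(r-1)(i-1)}$ furnished by Corollary~\ref{Cor:W-m-n} yields the claim. (Equivalently, one may start from the formula of Corollary~\ref{Cor:Regev-formula}, set $\boldsymbol{u}=\boldsymbol{\varsigma}$, $q=1$, and repeat verbatim the manipulation of $\Theta_{\bk|\bl}^{\boldsymbol{\varsigma},1}(r,\mu_j^{(r)})$ carried out in the proof of Corollary~\ref{Cor:W-m-n}.)

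The only step requiring genuine care — hence the main obstacle — is the legitimacy of the specialisation $(\boldsymbol{u},q)=(\boldsymbol{\varsigma},1)$: one must know that $V^{\otimes n}$, regarded over the appropriate integral form (note that the Vandermonde determinant $\Delta=\prod_{a<b}(u_b-u_a)$ does not vanish at $\boldsymbol{u}=\boldsymbol{\varsigma}$ since the $\varsigma^{i-1}$ are distinct, so Shoji's presentation specialises without problem), reduces modulo this specialisation to the stated $\mathbb{C}W_{m,n}$-module, and that the generic irreducibles $S^{\bgl}$ reduce to the irreducibles of the semisimple algebra $\mathbb{C}W_{m,n}$ with the same labelling set $\mathscr{P}_{m,n}$ and the same multiplicities. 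Both facts are standard consequences of the Tits deformation theorem and the Geck--Rouquier semisimplicity theory for cyclotomic Hecke algebras, so once they are invoked the deduction is immediate and no further computation is needed.
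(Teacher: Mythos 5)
Your proposal is correct and follows essentially the same route as the paper, whose own (one-line) proof just combines Remark~\ref{Rem:Schur-Sergeev-group}, Corollary~\ref{Cor:Regev-formula} and Corollary~\ref{Cor:W-m-n}: that is, the decomposition $\chi_{\bk|\bl;n}=\sum_{\bgl\in H(\bk|\bl;n)}s_{\bk|\bl}(\bgl)\chi^{\bgl}$ from (\ref{Equ:Trace-char}), transported to the group case and evaluated at $w_{\bmu}$, together with the value of $\chi_{\bk|\bl;n}^{\boldsymbol{\varsigma},1}(w_{\bmu})$ supplied by Corollary~\ref{Cor:W-m-n}. The only (minor) divergence is that the paper obtains the group-level decomposition directly from the classical Schur--Sergeev duality of Remark~\ref{Rem:Schur-Sergeev-group} (Berele--Regev/Sergeev theory applied componentwise), so your Tits-deformation/semisimple-specialisation discussion, while sound, is not needed for the argument as the paper structures it.
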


For $a\geq 1$, $1\leq i\leq m$, and $j\geq 1$, let  \begin{align*}  \mathscr{C}_j(a;i)&=\left\{(\alpha;\beta)\models a\left|\begin{array}{l}\alpha=(a_1, \ldots,a_i), \beta=(b_1, \ldots, b_i),\ell(\alpha;\beta)=j,\\a_s,b_s\geq 0\text{ for }1\leq s\leq i-1, \text{ and } a_i+b_i>0 \end{array}\right\}\right., \end{align*}
and define
\begin{equation*}
\Theta_j(a;i)=\sum_{\substack{(\ga;\gb)\in\mathscr{C}_j(a;i)}}(-q)^{|\ga|-\ell(\ga)}(1-q)^{\ell(\ga;\gb)-1}.
\end{equation*}
Thanks to Lemma~\ref{Lemm:Trace-g(r,a)}, we can
express $\Theta_{\boldsymbol{1}_m|\boldsymbol{1}_m}^{\boldsymbol{u},q}(r,a)$ as:
\begin{equation*}
\Theta_{\boldsymbol{1}_m|\boldsymbol{1}_m}^{\boldsymbol{u},q}(r,a)
=\sum_{i=1}^mc_i(r,a)u_i^{r-1}.
\end{equation*}
Then  $c_i(r,a)=\sum_{j=1}^{2i}\Theta_j(a;i)$. In particular \begin{align*}
c_1(r,a)&=\Theta_1(a;1)+\Theta_2(a;1)=1+(-q)^{a-1}+(1-q)[a-1]=2[a].\end{align*}

It is interesting to give an explicit formula of $c_i(r,a)$ ($i=1, \ldots, m$), which enables us to present a closed formula for the characters of the Ariki--Koike algebras. The following examples  present explicit formulas of $\Theta_1(a;i)$, $\Theta_2(a;i)$, and $c_2(r,a)$.
\begin{example}
Let $\boldsymbol{e}_s$ be the $i$-dimensional row vector with  the $s$-th coordinate being 1 and zero others. Then $\mathscr{C}_1(a;i)=\left\{(a\boldsymbol{e}_i;\emptyset), (\emptyset; a\boldsymbol{e}_i)\right\}$ and
\begin{align*}\mathscr{C}_2(a;i)&=\left\{(x\boldsymbol{e}_s+(a-x)\boldsymbol{e}_i;\emptyset), (\emptyset; x\boldsymbol{e}_s+(a-x)\boldsymbol{e}_{i})|1\leq x<a, 1\leq s<i\right\}\\
&\qquad\cup\left\{(x\boldsymbol{e}_s;(a-x)\boldsymbol{e}_i),
(x\boldsymbol{e}_i;(a-x)\boldsymbol{e}_s) |1\leq x<a, 1\leq s\leq i\right\}.\end{align*}
Thus
 \begin{align*}\Theta_1(a;i)&=1+(-q)^{a-1},\\ \Theta_2(a;i)&=(i-1)(a-1)(1-q)(1+(-q)^{a-2})+(2i-1)(1-q)[a-1].\end{align*}
\end{example}

\begin{lemma}\label{Lemm:Z12}For integer $a\geq 1$ and $b\geq0$, set $Z_b(a):=\displaystyle\sum_{z=0}^{a-1}\frac{(z+b)!}{z!}(-q)^{z}$.  Then
\begin{align*}
  Z_b(a)&=\frac{bZ_{b-1}(a)-(-q)^a\prod_{i=0}^{b-1}(a+i)}{1+q}.
  \end{align*}\end{lemma}
  \begin{proof}Since  $-qZ_b(a)=\sum_{z=1}^{a}z(z+1)\cdots(z+b-1)(-q)^{z}$, we get
\begin{eqnarray*}
  (1+q)Z_b(a)&=&bZ_{b-1}(a)-(-q)^{a}\prod_{i=0}^{b-1}(a+i),
\end{eqnarray*}
which proves the lemma.\end{proof}

\begin{example}For integer $a\geq 1$ and $b\geq0$, we have
\begin{align*}\mathscr{C}_3(a;2)&=\left\{((a_1,a_2);(a_3,a_4))\models a|\text{exactly one $a_i$ is zero}\right\},\\
\mathscr{C}_4(a;2)&=\left\{((a_1,a_2);(a_3,a_4))\models a|a_j\geq 1\right\},\\
\Theta_3(a;2)&=2(a-1)(1-q)^2[a-2],\\ \Theta_4(a;2)&=(a-1)[a-3](1-q)^3-Z_2(a-3)(1-q)^3.
\end{align*}
Thus
\begin{align*}c_2(r,a)&=\Theta_1(a;2)+\Theta_2(a;2)+\Theta_3(a;3)+\Theta_4(a;2)\\
    &=2[a]\!+\!2a[a\!-\!1](1\!-\!q)\!+  \!(a\!-\!1)(1\!-\!q)^2\left([a\!-\!2]\!+\![a\!-\!3](1\!-\!q)-Z_2(a\!-\!3)(1\!-\!q)\right).
\end{align*}
\end{example}

\begin{lemma}For  $a,i,r\geq 1$ with $1\leq i,r\leq m$,
 \begin{equation*}
 c_i(r,a)=2[a]+2a(i-1)(1-q)[a-1]+O((1-q)^2),
\end{equation*}
where $O((1-q)^2)$ denotes the remainder terms with factor $(1-q)^2$.
\end{lemma}
\begin{proof}Note that $\Theta_j(i,a)=O((1-q)^2)$ for $j\geq 3$ and that
\begin{equation*}
  1+(-q)^{a-1}+(1-q)[a-1]_{-q}=2[a].
\end{equation*}
Thus
  \begin{align*}c_i(r,a)&=\Theta_1(i,a)+\Theta_2(i,a)+O((1-q)^2)\\
&=1\!+\!(-q)^{a\!-\!1}\!+\!(i\!-\!1)(a\!-\!1)(1\!-\!q)(1+(\!-\!q)^{a\!-\!2})\!
+\!(2i\!-\!1)(1\!-\!q)[a\!-\!1]\!+\!O((1-q)^2)\\
    &=2[a]\!+\!(i\!-\!1)(a\!-\!1)(1\!-\!q)(1+(\!-\!q)^{a\!-\!2})\!
+\!2(i\!-\!1)(1\!-\!q)[a\!-\!1]\!+\!O((1-q)^2)\\
    &=2[a]\!+\!2a(i\!-\!1)(1\!-\!q)[a-1]\!+\!O((1-q)^2).
\end{align*}
It completes the proof.
\end{proof}

\begin{proof}[Proof of Corollary~\ref{Cor:mu-1-1}]
 By \cite[Theorem~6.24]{B-Regev}, $s_{\boldsymbol{1}_m|\boldsymbol{1}_m}(\bgl)=2^{\#\bgl}$ for $\bgl\in H(\boldsymbol{1}_m|\boldsymbol{1}_m;n)$. Since $\mathscr{C}(a; \boldsymbol{1}_m|\boldsymbol{1}_m)$ consists composition pairs   $(\alpha;\beta)\models a$ with $0\leq \ell(\alpha),\ell(\beta)\leq m$, Corollary~\ref{Cor:Regev-formula} gives
 \begin{align*}
 \sum_{\bgl\in H(\boldsymbol{1}_m|\boldsymbol{1}_m;n)}2^{\#\bgl}\chi^{\bgl}(g_{\bmu})
 &=2^{\ell(\bmu)}\!\prod_{r=1}^m\!\prod_{j=1}^{\ell(\mu^{(r)})}
\!\!\sum_{i=1}^m\left(\![\mu^{(r)}_j]\!+\!(i\!-\!1)\mu_j^{(r)}[\mu_j^{(r)}\!-\!1](1\!-\!q)\!\right)
u_i^{r\!-\!1}\!+\!O((1\!-\!q)^2).
 \end{align*}
It completes the proof.
\end{proof}

\begin{proof}[Proof of Corollary~\ref{Cor:mu-1-1-wreath}]By Corollary~\ref{Cor:W-m-n},
  \begin{eqnarray*}
  \sum_{\bgl\in H(\boldsymbol{1}_m|\boldsymbol{1}_m;n)}2^{\#\bgl}\chi^{\bgl}(\bmu)&=&
  \!\prod_{r=1}^m\prod_{j=1}^{\ell(\mu^{(r)})}
  \sum_{i=1}^m\left(1-(-1)^{|\mu_j^{(r)}|}\right)\varsigma^{(r-1)(i-1)}\\
  &=&\prod_{j=1}^{\ell(\mu^{(1)})}m
  \left(1-(-1)^{|\mu_j^{(1)}|}\right)\\
  &=&\left\{\begin{array}{ll}\vspace{2\jot}
 (2m)^{\ell(\bmu)},&\text{if $\mu^{(1)}\vdash n$ with each part being odd};\\
  0,&\text{otherwise},
  \end{array}\right.
\end{eqnarray*}
where the second equality follows by applying $\sum_{i=1}^m\varsigma^{t(i-1)}=0$ for  $1\leq t<m$.
\end{proof}

\begin{proof}[Proof of Theorem~\ref{Them:Pair-Regeev}]Clearly, if  $(\mu;\nu)$ is a pair of  hook partitions with $|\mu|+|\nu|=n$, then $\mu=(x,1^i)$ and $\nu=(y, 1^j)$, where $x,y,i,j$ are non-negative integers with $x+y+i+j=n$ and $x, y\geq 1$ when $i,j>0$. Therefore $\chi_n=\sum_{\bgl\in H(\boldsymbol{1}_2|\boldsymbol{1}_2,n)}2^{\#\bgl}\chi^{\bgl}$ and the theorem follows directly by applying Corollaries~\ref{Cor:mu-1-1} and \ref{Cor:mu-1-1-wreath}.
\end{proof}

The following fact follows directly by applying Theorem~\ref{Them:Pair-Regeev}.

\begin{corollary}\label{Cor:alpha-0}Keeping notations as Theorem~\ref{Them:Pair-Regeev}. Then for $\alpha\vdash n$, we have
  \begin{align*}
    \chi_n(g_{(\alpha;\emptyset)})=\chi_n(g_{(\emptyset;\alpha)})=
    2^{\ell(\alpha)}\prod_{i=1}^{\ell(\alpha)}\biggl([\alpha_i]+
    \alpha_i[\alpha_i-1]u(1-q)\biggr)+O((1\!-\!q)^2).
  \end{align*}
\end{corollary}

\subsection*{Acknowledgements.} The author is very grateful to the anonymous referee  for their  numerous helpful comments and corrections, which greatly improved
the presentation of this paper.


\end{CJK*}
\end{document}